\documentclass[letterpaper,10pt]{amsart}
\usepackage[intlimits]{gafmath} 
\usepackage{amssymb,amsfonts} 
\usepackage{graphicx}
\usepackage{subfigure}
\usepackage[alphabetic]{amsrefs}
\usepackage[pagebackref]{hyperref}

\hypersetup{pdfauthor={G. Austin Ford},pdftitle={The fundamental solution and Strichartz estimates for the Schr\"odinger equation on flat euclidean cones}}


\numberwithin{theorem}{section}


\DeclareMathOperator{\erfc}{erfc}

\begin{document}

\title[Schr\"odinger equation on flat euclidean cones]{The fundamental solution and Strichartz estimates for the Schr\"odinger equation on flat euclidean cones}

\author{G.\ Austin Ford}
\address{Department of Mathematics, Northwestern University}
\email{aford@math.northwestern.edu}

\begin{abstract}
We study the Schr\"odinger equation on a flat euclidean cone $\mathbb{R}_+ \times \mathbb{S}^1_\rho$ of cross-sectional radius $\rho > 0$, developing asymptotics for the fundamental solution both in the regime near the cone point and at radial infinity.  These asymptotic expansions remain uniform while approaching the intersection of the ``geometric front,'' the part of the solution coming from formal application of the method of images, and the ``diffractive front'' emerging from the cone tip.  As an application, we prove Strichartz estimates for the Schr\"odinger propagator on this class of cones.
\end{abstract}

\maketitle


\setcounter{section}{-1}

\section{Introduction}\label{sect:Intro}  
In this paper, we study the initial value problem for the Schr\"odinger equation,
\begin{equation}\label{eq:Schro IVP}
\left\{
\begin{split}
\left(D_t - \Delta\right) u(t,r,\theta) &= 0 \\
u(0,r,\theta) &= u_0(r,\theta) \; ,
\end{split}
\right.
\end{equation}
on a flat euclidean cone.  This is an incomplete manifold $C(\mathbb{S}^1_\rho) \defeq \mathbb{R}_+ \times \mathbb{S}^1_\rho$, where $\mathbb{S}^1_\rho \defeq \mathbb{R} \big/ 2\pi\rho\mathbb{Z}$ is the circle of radius $\rho > 0$, equipped with the metric $g(r,\theta) = dr^2 + r^2 d\theta^2$, and the Laplacian $\Delta$ is taken to be the Friedrichs extension of $\Delta_g \big\vert_{\mathcal{C}^\infty_c\!\left(C(\mathbb{S}^1_\rho)\right)}$.  Specifically, we are interested in the behavior of the fundamental solution $e^{it\Delta} \delta_{(r_0,\theta_0)}$ to \eqref{eq:Schro IVP}.

We begin by using Cheeger's functional calculus for cones, developed first in \cite{Che}, to show that the Schr\"odinger propagator on $C(\mathbb{S}^1_\rho)$ has the series representation
\begin{multline}\label{eq:intro std cone SSK}
K_{e^{it\Delta}}(r_1,\theta_1,r_2,\theta_2) = \\
-\frac{\exp\!\left[\frac{r_1^2 + r_2^2}{4it}\right]}{4\pi i\rho t}  \left\{ J_0\left( \frac{r_1 r_2}{2t} \right) + 2\sum_{j = 1}^\infty i^{j/\rho} \, J_{j/\rho}\!\left( \frac{r_1 r_2}{2t} \right) \cos\!\left[ \frac{j}{\rho} \left( \theta_1 - \theta_2 \right) \right] \right\} \; ,
\end{multline}
where $J_\nu(x)$ is the Bessel function of order $\nu$.  Employing an integral representation for $J_\nu(x)$ due to Schl\"afli, we then show that the quantity in braces in \eqref{eq:intro std cone SSK} can be represented by the loop integral
\begin{multline}\label{eq:intro S(x,eta) loop}
S(x,\eta) = \frac{1}{4\pi} \int_C \exp\!\left[ \frac{x}{2} \left(v - \frac{1}{v} \right) \right] \left\{ \cot \left[ \frac{\frac{\pi}{2} + \eta + i \log[v]}{2\rho} \right] \right. \\
\left. \mbox{} + \cot \left[ \frac{\frac{\pi}{2} - \eta + i \log[v]}{2\rho} \right] \right\} \frac{dv}{v} \; ,
\end{multline}
where $x \defeq \frac{r_1 r_2}{2t}$ and $\eta \defeq \theta_1 - \theta_2$.

Due to the fact that the amplitude of $S(x,\eta)$ has poles which move with $\eta$ and collide with the stationary points $v = \pm i$ of the phase $\frac{1}{2} \left( v - \frac{1}{v} \right)$, the standard techniques of asymptotic analysis will not produce an expansion of \eqref{eq:intro S(x,eta) loop} which is uniform in $\eta$.  However, by modifying a version of the method of steepest descent due to van der Waerden \cite{vdW}, we are able to produce a uniform asymptotic expansion of $S(x,\eta)$ in decreasing powers of $x$ as $x \To \infty$, and this leads us to an expansion for $K_{e^{it\Delta}}$ in decreasing powers of $\frac{r_1 r_2}{2t}$ as this quantity approaches infinity.  Namely, we show
\begin{multline}\label{eq:intro asymptotics away from interface}
K_{e^{it\Delta}}(r_1,\theta_1,r_2,\theta_2) \\
\mbox{} \sim \frac{1}{t} \sum_{\alpha = \pm 1} \left\{ \mathsf{G}_0^\alpha + \mathsf{G}^\alpha_{-\frac{1}{2}} \left(\frac{r_1 r_2}{2t} \right)^{-\frac{1}{2}} + \sum_{k=0}^\infty \mathsf{D}^\alpha_{-\frac{2k+1}{2}} \left(\frac{r_1 r_2}{2t}\right)^{-\frac{2k + 1}{2}} \right\} \qquad \text{as $\dfrac{r_1 r_2}{2t} \To \infty$}
\end{multline}
in the regime away from poles of \eqref{eq:intro S(x,eta) loop} coinciding with the stationary points of the phase, i.e.\ $\theta_1 - \theta_2 \not\equiv -\pi$, $0$, or $\pi \mod{2\pi\rho}$.  These functions $\mathsf{G}^\alpha_s$ and $\mathsf{D}^\alpha_s$ are piecewise-smooth and bounded in all variables, and their precise definitions can be found in Section \ref{sect:Asymptotics}.  We also provide asymptotics as $\frac{r_1 r_2}{2t} \To 0$, showing
\begin{equation}
K_{e^{it\Delta}}(r_1,\theta_1,r_2,\theta_2) = -\frac{1}{4\pi i \rho t} \exp\!\left[ \frac{r_1^2 + r_2^2}{4it} \right] \left(1 + \mathrm{O}\left( \left(\frac{r_1 r_2}{2t} \right)^\sigma \right)\right)
\end{equation}
in this regime, where $\sigma \defeq \min\!\left(2,\frac{1}{\rho}\right)$.  This only makes use of elementary estimates for Bessel functions.

This asymptotic expansion \eqref{eq:intro asymptotics away from interface} shows that the Schr\"odinger kernel $K_{e^{it\Delta}}$ is separated into two parts, the ``geometric'' factors $\left(\frac{r_1 r_2}{2t}\right)^{-s} \mathsf{G}_s^\alpha$ analogous to those that would arise from the formal application of the method of images, and the $\left(\frac{r_1 r_2}{2t}\right)^{-s} \mathsf{D}^\alpha_s$ terms arising from a ``diffractive effect'' emerging from the cone tip.  The diffractive terms have noticeably better decay vis-\`a-vis $\frac{r_1 r_2}{2t}$, being of order $-\frac{1}{2}$ in this variable, whereas the geometric terms as a whole are of order $0$.  This is analogous to the classical results for the wave equation of Sommerfeld \cite{Som} and Friedlander \cite{Fri} in the presence of obstacles and later work of Cheeger and Taylor in the setting of product cones \cite{CheTay1} \cite{CheTay2} and of Melrose and Wunsch for manifolds with cone points \cite{MelWun}.  In each case, they show the diffractive front is $\frac{1}{2}$ degree ``weaker,'' i.e.\ more regular in an appropriate sense.  It is also morally consistent with the parametrix construction of Hassell and Wunsch \cite{HasWun} for the Schr\"odinger equation on scattering manifolds, where they show the leading order part of the propagator is given by the sojourn relation.

As an application of our asymptotic expansion, we prove the Strichartz estimates 
\begin{equation}\label{eq:intro Strichartz 1}
\left\| \mathcal{U}(t) f(r,\theta) \right\|_{L^p_t L^q(r \, dr d\theta)} \lesssim \left\|f\right\|_{L^2(r \, dr d\theta)}
\end{equation}
\begin{equation}\label{eq:intro Strichartz 2}
\left\| \int \mathcal{U}(-s) F(s,r,\theta) \, ds \right\|_{L^2(r \, dr d\theta)} \lesssim \left\| F(t,r,\theta) \right\|_{L^{p'}_t L^{q'}(r \, dr d\theta)}
\end{equation}
\begin{equation}\label{eq:intro Strichartz 3}
\left\| \int_{s < t} \mathcal{U}(t-s) F(s,r,\theta) \, ds \right\|_{L^p_t L^q(r \, dr d\theta)} \lesssim \left\| F(t,r,\theta) \right\|_{L^{\tilde{p}}_t L^{\tilde{q}}(r \, dr d\theta)}
\end{equation}
for the Schr\"odinger propagator $\mathcal{U}(t) \defeq e^{it\Delta}$ using the theorem of Keel and Tao \cite{KeeTao}.  Planchon and Stalker show these estimates for rational $\rho$ in their manuscript \cite{PlaSta}, though their method does not seem to generalize to irrational cross-sectional radius.  We also note there has also been related work in the case of exterior domains \cite{BurGerTzv} \cite{Iva} and in the presence of inverse square potentials \cite{BurPlaStaTah}.

It is worth remarking that Deser and Jackiw produce analogous expressions to \eqref{eq:intro std cone SSK} and \eqref{eq:intro S(x,eta) loop} in \cite{DesJac}, though their integral representation is not as well suited for our purposes as the one provided here.

The structure of the paper is as follows.  In Section \ref{sect:Cheeger fl calc}, we review Cheeger's functional calculus for flat cones.  In Section \ref{sect:SchroEqn}, we specialize the setting to a cone over the circle $\mathbb{S}^1_\rho$ and determine the Schr\"odinger kernel as a Fourier series with Bessel function coefficients.  Section \ref{sect:FundSoln} is dedicated to the construction of the integral representation \eqref{eq:intro std cone SSK} for the Schr\"odinger kernel.  We then develop its asymptotics in Section \ref{sect:Asymptotics}, utilizing van der Waerden's method of steepest descent to counteract the difficulties found approaching the interface of the geometric and diffractive fronts.  Finally, in Section \ref{sect:Strichartz} we use the information gained from the development of the asymptotic expansion to prove Strichartz estimates for the Schr\"odinger propagator.

\subsection*{Acknowledgements}  The author would like to thank Jared Wunsch for suggesting the problem and for encouragement and helpful conversations.  He also thanks Fabrice Planchon for access to the unpublished manuscript \cite{PlaSta} and an anonymous referee for comments improving the exposition.

\section{Cheeger's functional calculus for cones}\label{sect:Cheeger fl calc}

We shall begin by establishing some notation and briefly recalling Cheeger's functional calculus for flat cones; for a thorough discussion of these results and other applications, we refer to Cheeger's article with Taylor \cite{CheTay1} or to the second book of Taylor's treatise \cite{Tay2}.

Let $M^m$ be a closed $\mathcal{C}^\infty$ manifold with Riemannian metric $h$, and let $C(M) \defeq \mathbb{R}_+ \times M$ be the cone over $M$.  We give $C(M)$ the Riemannian metric
\begin{equation}
g(r,x) = dr^2 + r^2 \, h(x) \; .
\end{equation}
The positive Laplacian on $C(M)$ then takes the form
\begin{equation}
\Delta_g = - \del_r^2 - \frac{m}{r} \, \del_r + \frac{1}{r^2} \, \Delta_h \; ,
\end{equation}
where $\Delta_h$ is the (positive) Laplacian on the cross-sectional manifold $M$.  Writing $\left\{ \mu_j \right\}_{j=0}^\infty$ for the eigenvalues of $\Delta_h$ (with multiplicity) and $\left\{ \varphi_j : M \To \mathbb{C} \right\}_{j=0}^\infty$ for the corresponding eigenfunctions, we define the rescaled eigenvalues $\nu_j$ by
\begin{equation}
\nu_j \defeq \left( \mu_j + \frac{(m-1)^2}{4} \right)^{\frac{1}{2}} \; .
\end{equation}

Henceforth, we take $\Delta_g$ to be the Friedrichs extension of the above Laplace operator on functions.  As is well known, a suitable function $f : C(M) \To \mathbb{C}$ gives rise to an operator $f(\Delta_g)$ via spectral theory.  Cheeger's separation of variables approach shows that the Schwartz kernel of $f(\Delta_g)$, which we will write as $K_{f(\Delta_g)}$, takes the form
\begin{equation}\label{eq:prop SK}
K_{f(\Delta_g)}(r_1,x_1,r_2,x_2) = \left(r_1 r_2\right)^{-\frac{m-1}{2}}\sum_{j=0}^\infty \tilde{K}_{f(\Delta_g)}(r_1,r_2,\nu_j) \, \varphi_j(x_1) \, \overline{\varphi_j(x_2)} \; ,
\end{equation}
where the radial coefficient $\tilde{K}_{f(\Delta)}$ is
\begin{equation}\label{eq:SK radial part}
\tilde{K}_{f(\Delta_g)}(r_1,r_2,\nu) \defeq \int_{0}^\infty f(\lambda^2) \, J_\nu(\lambda r_1) \, J_\nu(\lambda r_2) \, \lambda \, d\lambda \; .
\end{equation}
Here, $J_\nu(x)$ is the Bessel function of order $\nu$,
\begin{equation}
J_\nu(x) \defeq \sum_{j=0}^\infty \frac{(-1)^j}{j! \, \Gamma(\nu+j+1)} \left( \frac{x}{2} \right)^{\nu + 2j} \; .
\end{equation}

\section{The Schr\"odinger equation on flat cones}\label{sect:SchroEqn}

We now specialize to the case where the cross-sectional manifold $M$ is the circle of radius $\rho > 0$, which we will write as $\mathbb{S}^1_\rho \defeq \mathbb{R} \big/ 2 \pi \rho \mathbb{Z}$.  Equipping it with the metric $h(\theta) = d\theta^2$ inherited from $\mathbb{R}$, the Laplace operator on $\mathbb{S}^1_\rho$ is $\Delta_h = - \del_\theta^2$, and its eigenvalues and eigenfunctions are
\begin{equation}
\mu_j = \frac{j}{\rho}, \qquad \varphi_j^{\pm}(\theta) = \frac{1}{\sqrt{2\pi \rho}} \, \exp\!\left[ \pm \frac{ij\theta}{\rho} \right], \qquad \text{where $j = 0,1,2,\dots$.}
\end{equation}
Note that the positive eigenvalues $\mu_j > 0$ have multiplicity 2, whereas $\mu_0 = 0$ has multiplicity 1.  Moving to the cone $C(\mathbb{S}^1_\rho)$ with metric $g = dr^2 + r^2 \, d\theta^2$, the associated Laplacian $\Delta_g$ is
\begin{equation}
\Delta_g = - \del_r^2 - \frac{1}{r} \, \del_r - \frac{1}{r^2} \, \del_\theta^2 \; ,
\end{equation}
which we see to be the standard Laplacian for $\mathbb{R}^2$ written in polar coordinates.  In the following, we will write $\Delta$ for the Friedrichs extension of this Laplace operator on $C(\mathbb{S}^1_\rho)$, understanding that the metric dependence is implicit.

Consider the solution operator for the Schr\"odinger equation \eqref{eq:Schro IVP},
\begin{equation}
\mathcal{U}(t) = e^{i t \Delta} \; .
\end{equation}
Using Cheeger's formulae \eqref{eq:prop SK} and \eqref{eq:SK radial part} for the Schwartz kernel of functions of the Laplacian, we see that
\begin{equation}\label{eq:cone SSK 1}
K_{e^{it\Delta}}(r_1,\theta_1,r_2,\theta_2) = \frac{1}{2\pi\rho}\sum_{j=-\infty}^\infty \tilde{K}_{e^{it\Delta}}\!\left(r_1,r_2,\frac{|j|}{\rho}\right) \, \exp\!\left[ \frac{ij}{\rho} \left(\theta_1 - \theta_2\right) \right] \; ,
\end{equation}
where
\begin{equation}
\tilde{K}_{e^{it\Delta}}(r_1,r_2,\nu) = \int_0^\infty e^{it\lambda^2} \, J_\nu(\lambda r_1) \, J_\nu(\lambda r_2) \, \lambda \, d\lambda \; .
\end{equation}
Letting $t = i s$, we obtain an expression for the heat kernel $e^{-s\Delta}$.  In particular, applying Weber's second exponential integral \cite{Wat}*{$\S 13.31(1)$} to the radial coefficient $\tilde{K}_{e^{-s\Delta}}$ gives us the expression
\begin{equation}
\tilde{K}_{e^{-s\Delta}}(r_1,r_2,\nu) = \frac{\exp\!\left[-\frac{r_1^2 + r_2^2}{4s}\right]}{2 s} \, I_\nu\!\left(\frac{r_1r_2}{2s}\right) \; ,
\end{equation}
where $I_\nu(x)$ is the modified Bessel function of the first kind,
\begin{equation}
I_\nu(x) \defeq \sum_{j=0}^\infty \frac{1}{j! \, \Gamma(\nu+j+1)} \left( \frac{x}{2} \right)^{\nu + 2j} \; .
\end{equation}
Analytic continuation in $s$ and setting $s = -i t$ then returns an expression for the Schr\"odinger propagator,
\begin{equation}
\tilde{K}_{e^{it\Delta}}(r_1,r_2,\nu) = \frac{i^{\nu+1} \, \exp\!\left[\frac{r_1^2 + r_2^2}{4it}\right]}{2t} \, J_\nu\!\left(\frac{r_1r_2}{2t}\right) \; ,
\end{equation}
where we use the fact that $I_\nu(ix) = i^\nu \, J_\nu(x)$.  Substituting this into \eqref{eq:cone SSK 1} and combining the exponentials $\exp\!\left[\frac{ij}{\rho} \left(\theta_1 - \theta_2\right) \right]$ and $\exp\!\left[-\frac{ij}{\rho} \left(\theta_1 - \theta_2\right) \right]$ for positive $j$, we obtain the following proposition.

\begin{proposition}
The Schr\"odinger propagator $e^{i t \Delta}$ on $C(\mathbb{S}^1_\rho)$ has Schwartz kernel
\begin{multline}\label{eq:std cone SSK}
K_{e^{it\Delta}}(r_1,\theta_1,r_2,\theta_2) = \\
-\frac{\exp\!\left[\frac{r_1^2 + r_2^2}{4it}\right]}{4\pi i\rho t}  \left\{ J_0\left( \frac{r_1 r_2}{2t} \right) + 2\sum_{j = 1}^\infty i^{j/\rho} \, J_{j/\rho}\!\left( \frac{r_1 r_2}{2t} \right) \cos\!\left[ \frac{j}{\rho} \left( \theta_1 - \theta_2 \right) \right] \right\} \; .  \qed
\end{multline}
\end{proposition}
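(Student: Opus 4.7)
The plan is to apply Cheeger's functional calculus directly and then evaluate the radial coefficient via analytic continuation from the heat semigroup. The spectral data for $\mathbb{S}^1_\rho$ is elementary: the positive eigenvalues appear with multiplicity two and correspond to rescaled spectral parameters $\nu_j = j/\rho$, while $\nu_0 = 0$ has multiplicity one. Plugging these, together with the orthonormal eigenfunctions $\varphi_j^{\pm}$, into the general formula \eqref{eq:prop SK} yields \eqref{eq:cone SSK 1} with almost no computation. The substantive step is to obtain a closed form for
\[
\tilde{K}_{e^{it\Delta}}(r_1,r_2,\nu) = \int_0^\infty e^{it\lambda^2}\, J_\nu(\lambda r_1)\, J_\nu(\lambda r_2)\, \lambda \, d\lambda,
\]
whose integrand is merely oscillatory in $\lambda$ and therefore does not converge absolutely.

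To circumvent this I would first consider the analogous integral for the heat semigroup by setting $t = is$ with $s > 0$. The integrand then acquires a Gaussian decay factor $e^{-s\lambda^2}$, and Weber's second exponential integral \cite{Wat}*{$\S 13.31(1)$} delivers the closed-form expression
\[
\tilde{K}_{e^{-s\Delta}}(r_1,r_2,\nu) = \frac{1}{2 s}\, \exp\!\left[-\frac{r_1^2 + r_2^2}{4 s}\right]\, I_\nu\!\left(\frac{r_1 r_2}{2 s}\right).
\]
Both sides extend holomorphically in $s$ to the right half-plane $\mathrm{Re}(s) > 0$, so I would analytically continue the identity along a ray toward the positive imaginary axis and take boundary values as $s \to -it$ with $t > 0$. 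Applying $I_\nu(ix) = i^\nu J_\nu(x)$ then converts the modified Bessel function back to an ordinary one, producing the desired formula for $\tilde{K}_{e^{it\Delta}}(r_1,r_2,\nu)$.

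Finally, substituting this expression into \eqref{eq:cone SSK 1} and factoring out the common prefactor $-1/(4\pi i\rho t) \cdot \exp[(r_1^2 + r_2^2)/(4it)]$, the $+j$ summand pairs with its $-j$ partner through the identity $e^{ij\eta/\rho} + e^{-ij\eta/\rho} = 2\cos(j\eta/\rho)$ with $\eta = \theta_1 - \theta_2$, collapsing the bilateral sum to the one-sided cosine series of \eqref{eq:std cone SSK}. The main obstacle is justifying the analytic continuation rigorously, since the defining spectral integral on the boundary $\mathrm{Re}(s) = 0$ does not converge absolutely and the Bessel-function series for the kernel also converges only in a distributional sense. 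I would handle this either by a regularization $t \mapsto t - i\varepsilon$ with $\varepsilon \downarrow 0$, or by interpreting $K_{e^{it\Delta}}$ as a tempered distribution on $C(\mathbb{S}^1_\rho)^2$ from the outset and invoking uniqueness of analytic continuation of vector-valued holomorphic functions. Neither approach is difficult, but some version of this step is unavoidable.
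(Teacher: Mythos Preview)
Your proposal is correct and follows essentially the same route as the paper: Cheeger's formula reduces to \eqref{eq:cone SSK 1}, the radial coefficient is computed by passing to the heat kernel via $t = is$, applying Weber's second exponential integral, and analytically continuing back using $I_\nu(ix) = i^\nu J_\nu(x)$, after which the $\pm j$ terms pair into cosines. Your discussion of the analytic continuation's justification is in fact more careful than what the paper provides, which simply asserts the continuation without further comment.
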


\section{An integral representation for the fundamental solution}\label{sect:FundSoln}

The next step in our analysis is to transform the expression \eqref{eq:std cone SSK} for the Schwartz kernel of the propagator into one more amenable to calculation.  Before we start, we simplify the calculation by introducing the dummy variables $x$ and $\eta$, defined to be
\begin{equation}
x \defeq \frac{r_1 r_2}{2t} \qquad \text{and} \qquad \eta \defeq \theta_1 - \theta_2 \; ;
\end{equation}
these are the arguments of the Bessel functions and the cosines respectively.  We also introduce the name $S(x,\eta)$ for the quantity in braces in \eqref{eq:std cone SSK}, i.e.\
\begin{equation}\label{eq:S(x,eta) defn}
S(x,\eta) \defeq J_0(x) + 2\sum_{j = 1}^\infty i^{j/\rho} \, J_{j/\rho}(x) \cos\!\left[ \frac{j}{\rho} \, \eta \right] \; .
\end{equation}
This function $S(x,\eta)$ will be the primary target for our analysis, and its asymptotics will provide asymptotics of $K_{e^{it\Delta}}$.

\begin{lemma}
The function $S(x,\eta)$ has a loop integral representation
\begin{multline}\label{eq:S(x,eta) loop}
S(x,\eta) = \frac{1}{4\pi} \int_C \exp\!\left[ \frac{x}{2} \left(v - \frac{1}{v} \right) \right] \left\{ \cot \left[ \frac{\frac{\pi}{2} + \eta + i \log[v]}{2\rho} \right] \right. \\
\left. \mbox{} + \cot \left[ \frac{\frac{\pi}{2} - \eta + i \log[v]}{2\rho} \right] \right\} \frac{dv}{v} \; ,
\end{multline}
where $C$ is a contour starting at $-\infty$, encircling the unit circle in a counterclockwise direction, and returning to $-\infty$.
\end{lemma}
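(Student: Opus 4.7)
The plan is to start from the integral on the right-hand side of \eqref{eq:S(x,eta) loop} and reduce each term to a Bessel coefficient of the series $S(x,\eta)$. The central tool is Schl\"afli's classical integral representation
\begin{equation*}
J_\nu(x) = \frac{1}{2\pi i}\int_C \exp\!\left[\frac{x}{2}\left(v - \frac{1}{v}\right)\right] v^{-\nu - 1}\, dv,
\end{equation*}
valid for the Hankel-type loop $C$ appearing in the statement. Everything hinges on expanding the two cotangents in the integrand as geometric series in $v^{-1/\rho}$ and matching the resulting integrals against this formula.

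Explicitly, writing $\cot z = i(e^{2iz} + 1)/(e^{2iz} - 1)$, one obtains, for $|e^{2iz}|<1$,
\begin{equation*}
\cot z = -i - 2i\sum_{j=1}^\infty e^{2ijz}.
\end{equation*}
Taking $2z = (\pi/2 \pm \eta + i\log v)/\rho$ gives $e^{2iz} = i^{1/\rho} e^{\pm i\eta/\rho} v^{-1/\rho}$, so convergence of the expansion holds precisely when $|v| > 1$. I would therefore first deform $C$ (permissible because the only singularity of the Schl\"afli integrand away from the branch cut is the essential singularity at $v=0$, while the cotangent poles lie on $|v|=1$) so that it lies in $\{|v|\ge 1+\epsilon\}$; for instance, the circle of radius $1+\epsilon$ joined to the two sides of the cut along $(-\infty, -1-\epsilon]$.

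Adding the two cotangent expansions then yields
\begin{equation*}
\cot\!\left[\frac{\tfrac{\pi}{2} + \eta + i\log v}{2\rho}\right] + \cot\!\left[\frac{\tfrac{\pi}{2} - \eta + i\log v}{2\rho}\right] = -2i - 4i\sum_{j=1}^\infty i^{j/\rho} \cos\!\left[\frac{j\eta}{\rho}\right] v^{-j/\rho},
\end{equation*}
uniformly on the deformed contour. Inserting this into the right-hand side of \eqref{eq:S(x,eta) loop}, interchanging sum and integral, and applying Schl\"afli's formula with $\nu = 0$ (to the constant $-2i$) and with $\nu = j/\rho$ (to each geometric term) recovers exactly \eqref{eq:S(x,eta) defn}; the numerical constants $\frac{1}{4\pi}$, $-2i$, $-4i$, and $2\pi i$ telescope cleanly.

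The main obstacle is not the algebra but the analytic justification of the contour deformation and of the termwise integration. One has to verify that the integrand decays adequately on the two copies of $(-\infty,-1-\epsilon]$ that make up the cut portion of $C$, using the fact that $\exp[(x/2)(v-1/v)]$ decays along the negative real axis once we argue under an analytic continuation in $t$ giving $\operatorname{Re}(x)>0$, and then continue back; and one must check that sliding $C$ outward to $|v|=1+\epsilon$ crosses none of the cotangent poles, all of which sit on $|v|=1$. Once these are in hand, uniform convergence of the geometric series on any contour bounded away from $|v|=1$ makes the interchange of sum and integral routine, and the identification with Schl\"afli's integral completes the proof.
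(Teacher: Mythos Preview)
Your proposal is correct and matches the paper's proof almost exactly; the only difference is the direction of the argument. The paper starts from the series \eqref{eq:S(x,eta) defn}, inserts Schl\"afli's representation for each $J_{j/\rho}$, interchanges sum and integral on a contour with $|v|>1+\varepsilon$, sums the resulting geometric series, and then invokes the identity $\tfrac{1}{2}+\tfrac{e^{i\alpha}}{1-e^{i\alpha}}=\tfrac{i}{2}\cot[\alpha/2]$ to obtain the cotangents---precisely your computation run in reverse. Your justification of the contour deformation and the interchange is the same as the paper's (uniform geometric convergence on $|v|\ge 1+\varepsilon$, poles of the cotangent all on $|v|=1$). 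One small simplification: no analytic continuation in $t$ is needed for decay along the negative real axis, since for $v=-r$ with $r>0$ one has $\tfrac12(v-1/v)=\tfrac12(-r+1/r)\to-\infty$, so $\exp[\tfrac{x}{2}(v-1/v)]$ already decays for real $x>0$.
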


\begin{proof}
Consider the Schl\"afli loop integral representation\footnote{The notation ``$\int_{-\infty}^{(0+)}$'' signifies that the contour begins at $-\infty$, wraps around the origin with positive (counterclockwise) orientation, and returns to $-\infty$.} for the Bessel function $J_\nu(x)$ \cite{Wat}*{\S6.2(2)},
\begin{equation}
J_\nu(x) = \frac{1}{2\pi i} \int_{-\infty}^{(0+)} \exp\!\left[ \frac{x}{2} \left(v - \frac{1}{v}\right) \right] \frac{dv}{v^{\nu + 1}} \; .
\end{equation}
Substituting this formula for the Bessel functions in the definition of $S(x,\eta)$ \eqref{eq:S(x,eta) defn} and exchanging the summation and integration, we have the expression
\begin{equation}\label{eq:S(x,eta) loop 1}
S(x,\eta) = \frac{1}{2\pi i} \int_{-\infty}^{(0+)} \exp\!\left[ \frac{x}{2} \left(v - \frac{1}{v}\right) \right] \left\{ 1 + 2 \sum_{j=1}^\infty \frac{i^{j/\rho} \, \cos\!\left[\frac{j}{\rho} \, \eta \right]}{v^{j/\rho}} \right\} \frac{dv}{v} \; .
\end{equation}
This exchange is justifiable by taking the contour to be sufficiently far away from the origin; choosing a contour so that $|v| > 1 + \varepsilon$ for some $\varepsilon > 0$ will ensure the resulting integral is absolutely convergent.

Under these same conditions, we can take advantage of the fact that the quantity in braces in \eqref{eq:S(x,eta) loop 1} is a sum of two geometric series:
\begin{equation}\label{eq:S(x,eta) loop 2}
\begin{split}
S(x,\eta) &= \frac{1}{2\pi i} \int_{-\infty}^{(0+)} \exp\!\left[ \frac{x}{2} \left(v - \frac{1}{v}\right) \right] \left\{ 1 + \sum_{j=1}^\infty \exp\!\left[ \frac{ij \left( \frac{\pi}{2} + \eta + i \log[v] \right)}{\rho} \right] \right. \\
&\hspace*{15em} \left. \mbox{} + \sum_{j=1}^\infty \exp\!\left[ \frac{ij \left( \frac{\pi}{2} - \eta + i \log[v] \right)}{\rho} \right] \right\} \frac{dv}{v} \\
&= \frac{1}{2\pi i} \int_{-\infty}^{(0+)} \exp\!\left[ \frac{x}{2} \left(v - \frac{1}{v}\right) \right] \left\{ 1 + \frac{\exp\!\left[\frac{i}{\rho} \left( \frac{\pi}{2} + \eta + i \log[v] \right) \right]}{1 - \exp\!\left[\frac{i}{\rho} \left( \frac{\pi}{2} + \eta + i \log[v] \right)\right]} \right. \\
&\hspace*{15em} \left. \mbox{} + \frac{\exp\!\left[\frac{i}{\rho} \left( \frac{\pi}{2} - \eta + i \log[v] \right) \right]}{1 - \exp\!\left[\frac{i}{\rho} \left( \frac{\pi}{2} - \eta + i \log[v] \right)\right]}\right\} \frac{dv}{v} 
\end{split}
\end{equation}
Here, the logarithm is chosen to have its branch along the nonpositive real axis so as not to interfere with the integration contour.  Now, we note the equality
\begin{equation}
\frac{1}{2} + \frac{\exp[i\alpha]}{1 - \exp[i\alpha]} = \frac{i}{2} \, \cot\!\left[ \frac{\alpha}{2} \right] \; .
\end{equation}
Substituting this into the above gives us the desired form.
\end{proof}

\begin{remark}
Deser and Jackiw use a similar method in \cite{DesJac}, though they apply it to another of Schl\"afli's integral representations \cite{Wat}*{\S6.2(3)}.  Our choice has the merit of producing a simpler expression for the exponential phase in $S(x,\eta)$, facilitating its asymptotic development in what follows.  $\diamond$
\end{remark}

Meromorphic continuation in $v$ of the integrand in \eqref{eq:S(x,eta) loop} shows that it is holomorphic away from the logarithmic branch along the nonpositive real axis and a finite number of poles.  These poles all lie on the unit circle and are of the form $e^{i\varphi}$ for $\varphi$ in the set $\mathcal{P}_\rho(\pm \eta)$ of ``pole phases,''
\begin{equation}
\mathcal{P}_\rho(\pm \eta) \defeq \left\{ \frac{\pi}{2} \pm \eta + 2 \pi \rho k ;\; k \in \mathbb{Z} \right\} \cap [-\pi,\pi) \; .
\end{equation}
The sign of $\eta$ here denotes to which summand of the amplitude the pole belongs, and the intersection with $[-\pi,\pi)$ restricts the poles to lying on a single sheet of the universal cover of the punctured plane.  This observation allows us to deform our contour $C$ as we wish.  

\section{Asymptotics of the fundamental solution}\label{sect:Asymptotics}

We will now calculate the asymptotics of $S(x,\eta)$ as $x \To 0$ and  $x \To \infty$ for general cross-sectional radius $\rho$.  These will in turn give us the asymptotics of the fundamental solution $e^{it\Delta} \delta_{(r_0,\theta_0)}$ of \eqref{eq:Schro IVP} which we will use to prove Strichartz estimates in Section \ref{sect:Strichartz}.

We begin by addressing the $x \To 0$ regime in the following proposition.
\begin{proposition}\label{thm:propagator asymptotics near cone tip}
The Schwartz kernel of $e^{it\Delta}$ has leading order asymptotics
\begin{multline}
K_{e^{it\Delta}}(r_1,\theta_1,r_2,\theta_2) \\
= -\frac{1}{4\pi i\rho t} \exp\!\left[ \frac{r_1^2 + r_2^2}{4it} \right] \left(1 + \mathrm{O}\!\left(\left(\frac{r_1 r_2}{2t}\right)^\sigma\right)\right) \qquad \text{as $\dfrac{r_1 r_2}{2t} \To 0$} \; ,
\end{multline}
where $\sigma \defeq \min\!\left(2,\frac{1}{\rho}\right)$.
\end{proposition}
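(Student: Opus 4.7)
My plan is to work directly from the series representation \eqref{eq:std cone SSK} established in the previous section, which already contains the prefactor $-\frac{1}{4\pi i\rho t}\exp\!\left[\frac{r_1^2+r_2^2}{4it}\right]$ that appears in the statement. It therefore suffices to show that the quantity in braces, namely $S(x,\eta)$, satisfies
\begin{equation*}
S(x,\eta) = 1 + \mathrm{O}(x^\sigma) \qquad \text{as } x \To 0,
\end{equation*}
uniformly in $\eta$, where $\sigma = \min(2,\tfrac{1}{\rho})$.

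The approach is to use the defining power series for $J_\nu$ termwise. First, I would isolate the $j=0$ summand $J_0(x)$ and use its Taylor expansion to write $J_0(x) = 1 - \tfrac{x^2}{4} + \mathrm{O}(x^4) = 1 + \mathrm{O}(x^2)$. For each $j \geq 1$, the leading term of $J_{j/\rho}(x)$ is $\frac{1}{\Gamma(j/\rho+1)}(x/2)^{j/\rho}$, so the first order of decay of the remainder sum is controlled by $(x/2)^{1/\rho}$.

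To make this rigorous and $\eta$-independent, the key estimate I would use is the elementary bound
\begin{equation*}
|J_\nu(x)| \leq \frac{(x/2)^\nu}{\Gamma(\nu+1)} \qquad \text{for } \nu \geq 0,\; 0 \leq x \leq 1,
\end{equation*}
which follows from the alternating-series test applied to the defining power series (the ratio of consecutive terms $\frac{(x/2)^2}{(k+1)(\nu+k+1)}$ is bounded above by $1$ for all $k \geq 0$ once $x \leq 1$ and $\nu \geq 0$). Bounding the cosine by $1$ in absolute value, this yields
\begin{equation*}
\left| 2\sum_{j=1}^\infty i^{j/\rho} J_{j/\rho}(x) \cos\!\left[\tfrac{j}{\rho}\eta\right] \right| \leq 2 \sum_{j=1}^\infty \frac{(x/2)^{j/\rho}}{\Gamma(j/\rho+1)},
\end{equation*}
which is an entire function of $(x/2)^{1/\rho}$ vanishing at $0$, hence $\mathrm{O}(x^{1/\rho})$ as $x \to 0$.

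Combining the two contributions gives $S(x,\eta) = 1 + \mathrm{O}(x^2) + \mathrm{O}(x^{1/\rho}) = 1 + \mathrm{O}(x^\sigma)$, and multiplying by the prefactor in \eqref{eq:std cone SSK} yields the proposition. I do not expect a serious obstacle: the argument is purely a matter of organizing the Bessel series and applying the standard monotonicity bound. The only subtle point is the competition between the $j=0$ term (which decays like $x^2$) and the $j=1$ term (which decays like $x^{1/\rho}$); the exponent $\sigma = \min(2,1/\rho)$ is precisely what arises from this comparison, reflecting the two regimes $\rho \leq 1/2$ (wide cones, where $j=0$ dominates) versus $\rho > 1/2$ (narrow cones, where the first nonzero angular mode dominates).
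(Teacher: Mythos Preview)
Your proposal is correct and follows essentially the same route as the paper: both arguments reduce to bounding $|S(x,\eta)-1|$ by splitting off $J_0(x)-1$ and estimating the tail $\sum_{j\ge 1}$ via the standard bound $|J_\nu(x)|\le (x/2)^\nu/\Gamma(\nu+1)$, yielding the two competing orders $x^2$ and $x^{1/\rho}$. The only cosmetic difference is that the paper sums the tail explicitly as a geometric series in $(x/2)^{1/\rho}$, whereas you invoke entirety of $\sum_{j\ge 1} y^j/\Gamma(j/\rho+1)$; both give the same $\mathrm{O}(x^{1/\rho})$ conclusion.
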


\begin{proof}
We begin with the bound \cite{Wat}*{$\S$3.31(1)}
\begin{equation}
\left| J_\nu(x) \right| \leqslant \frac{1}{\Gamma(\nu+1)} \left(\frac{x}{2}\right)^\nu \; ,
\end{equation}
valid for Bessel functions with $x$ real and $\nu > -\frac{1}{2}$.  For $|x| < 2$, this estimate implies
\begin{equation}
\begin{aligned}
\left|S(x,\eta) - 1\right| &\defeq \left| \left(J_0(x) - 1\right) + 2 \sum_{j=1}^\infty i^{j/\rho} \, J_{j/\rho}(x) \, \cos\!\left[\frac{j}{\rho} \, \eta\right] \right| \\
&\leqslant \left|\sum_{j=1}^\infty \frac{(-1)^j}{\left(j!\right)^2} \left(\frac{x}{2}\right)^{2j} \right| + 2 \sum_{j=1}^\infty \frac{1}{\Gamma\!\left(\frac{j}{\rho} + 1\right)} \left(\frac{x}{2}\right)^{j/\rho} \\
&\leqslant \sum_{j=1}^\infty \left(\frac{x^2}{4}\right)^j + \frac{2}{\Gamma\!\left(\frac{1}{\rho} + 1\right)} \sum_{j=1}^\infty \left(\frac{x}{2}\right)^{j/\rho} \\
&= \frac{x^2}{4 - x^2} + \frac{2}{\Gamma\!\left(\frac{1}{\rho} + 1\right)} \frac{x^{1/\rho}}{2^{1/\rho} - x^{1/\rho}} \; .
\end{aligned}
\end{equation}
This proves the desired asymptotics.
\end{proof}

To handle the $x \To \infty$ regime, we proceed by applying a modified version of the method of steepest descent\footnote{For the details of the standard method of steepest descent, see Olver's book \cite{Olv} or any book on asymptotics or special functions.} developed in van der Waerden's article \cite{vdW}.  Our approach will differ from van der Waerden's in that our poles move with changing $\eta$.  This produces a spurious singularity if we follow \cite{vdW} to the letter, however a straightforward modification prevents this kind of degeneration.

Before diving into the calculation, we define
\begin{equation}\label{eq:S_alpha(x,eta) defn}
S_\alpha(x,\eta) \defeq \frac{1}{4\pi} \int_C \exp\!\left[ \frac{x}{2} \left(v - \frac{1}{v} \right) \right] \cot \left[ \frac{\frac{\pi}{2} + \alpha\eta + i \log[v]}{2\rho} \right] \frac{dv}{v} \; ,
\end{equation}
where $\alpha = \pm 1$.  Thus $S(x,\eta) = S_{+}(x,\eta) + S_{-}(x,\eta)$.

\subsection{Van der Waerden's change of variables}\label{sect:vdW change of vars}

We introduce into the integral \eqref{eq:S_alpha(x,eta) defn} the change of variables
\begin{equation}\label{eq:v->u change of vars}
u(v) = \frac{1}{2} \left( v - \frac{1}{v} \right) \; ,
\end{equation}
taking the phase of $S_\alpha(x,\eta)$ as our new base variable.  This map $u : \mathbb{C} \To \mathbb{C}$ is a branched double cover of the complex plane with branch points at $i$ and $-i$.  The two sheets of this cover are the images of the reverse change of variables maps
\begin{equation}\label{eq:u->v change of vars}
v_\pm(u) = u \pm \left(u^2 + 1\right)^\frac{1}{2} \; ,
\end{equation}
namely
\begin{equation}
\begin{aligned}
v_-(\mathbb{C}) &= \left\{ v \in \mathbb{C} ;\; \Re[v] < 0 \right\} \cup \left\{ ib \in i\mathbb{R} ;\; |b| \leqslant 1 \right\}  \\
v_+(\mathbb{C}) &= \left\{ v \in \mathbb{C} ;\; \Re[v] > 0 \right\} \cup\left\{ ib \in i\mathbb{R} ;\; |b| \geqslant 1 \right\}
\end{aligned}
\end{equation}
as shown in Figure \ref{fig:v-plane}.  Here we take the principal branch of the square root, requiring $\Re\!\left[z^\frac{1}{2}\right] \geqslant 0$.  Our original variable $v$ is therefore a multi-valued function of $u$ whose branches are given by $v_\pm$.

Since one part of $C$ lies on the $v_-$-sheet and the other on the $v_+$-sheet, the image contour $u(C)$ crosses the branch cuts emanating from $u = \pm i$; see Figure \ref{fig:u-plane} for an illustration.  We shall write $C_\pm$ for the part of the contour $C$ lying in the sheet $v_\pm(\mathbb{C})$.
\begin{figure}
	\subfigure[The $v$-plane and its decomposition into the $v_\pm$-sheets]{\includegraphics{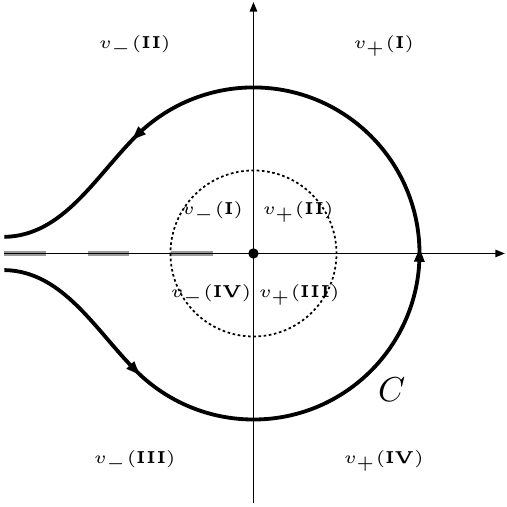}\label{fig:v-plane}}
	\subfigure[The $u$-plane]{\includegraphics{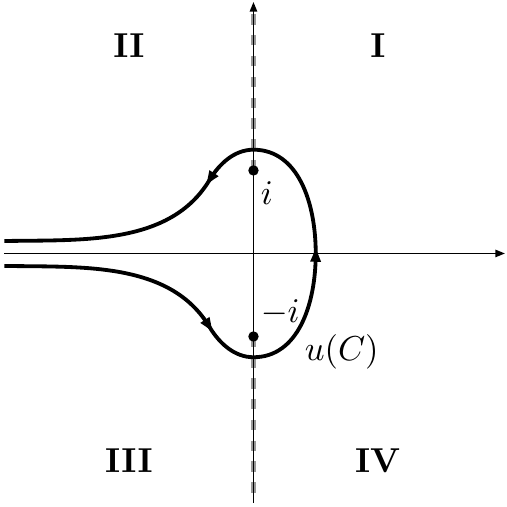}\label{fig:u-plane}}
\caption{The change of variables $v \leadsto u$ and its mapping properties (quadrants of the plane denoted by bold roman numerals, unit circle by thin dashed line, logarithmic cut by thick gray dashes, and branch cuts by thin gray dashes)}
\label{fig:vdW change of vars}
\end{figure}
Expressing $S_\alpha(x,\eta)$ in terms of $u$ produces the equation
\begin{equation}\label{eq:S_alpha(x,eta) via u}
S_\alpha(x,\eta) = \frac{1}{4\pi} \int_{u(C)} e^{x u} \, \cot\!\left[ \frac{\frac{\pi}{2} + \alpha \eta + i \log\!\left[ u + \sigma \left(u^2 + 1\right)^\frac{1}{2} \right]}{2\rho} \right] \frac{du}{\sigma \left(u^2 + 1\right)^\frac{1}{2}} \; ,
\end{equation}
where $\sigma = \sigma(u)$ is $\pm 1$ on $u(C_\pm)$ and serves to correct the branch of the square root.  We remark that the poles of the integrand, located initially at $e^{i\varphi}$ in the $v$-plane for $\varphi$ in $\mathcal{P}_\rho(\alpha\eta)$, move to the points $u\!\left(e^{i\varphi}\right) = i \sin(\varphi)$ in the segment of the imaginary axis between $-i$ and $i$ in the $u$-plane.

\subsection{Contour deformations and local uniformizations}\label{sect:contour deformations}
For the remainder of this section, we will assume that none of the poles $u = i \sin(\varphi)$ coincide with the branch points at $u = \pm i$, i.e.\
\begin{equation}
\alpha\eta \not\equiv -\pi \ \text{or} \ 0 \mod{2\pi\rho} \; .
\end{equation}
As we shall see, this assumption puts us in the regime where the ``geometric front,'' which is the part of the fundamental solution that arises from formal application of the method of images, and the ``diffractive front'' emanating from the cone tip do not interact.

We now deform the contour $u(C)$.  This starts by separating $u(C)$ into $u(C_-)$ and $u(C_+)$ (see Figure \ref{fig:separated u-plane contours}), the parts of $u(C)$ on which the integrand is single-valued; in what follows, we always ensure that after the deformations the endpoints of these contours match.
\begin{figure}
	\subfigure[The original $u(C_-)$]{\includegraphics{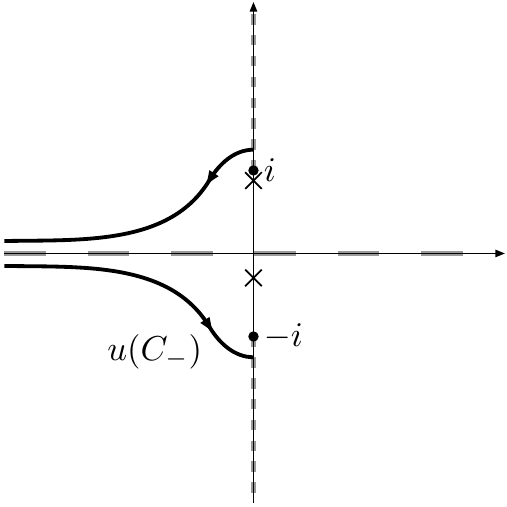}}
	\subfigure[The original $u(C_+)$]{\includegraphics{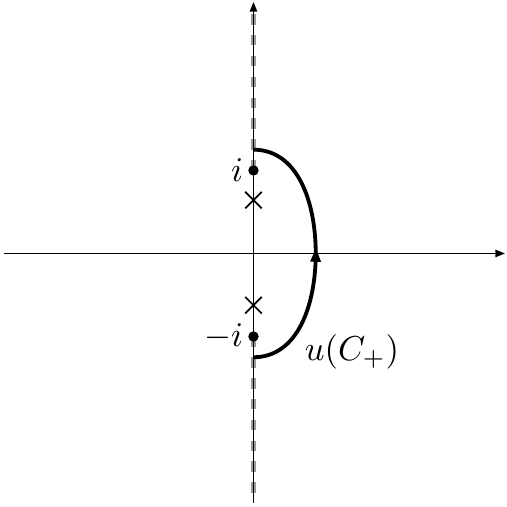}}
\caption{The contour $u(C)$ separated into $u(C_-)$ and $u(C_+)$ and example poles of the integrand ($\rho = \frac{1}{\sqrt{21}}$, $\eta = \frac{2\pi}{9}$)}
\label{fig:separated u-plane contours}
\end{figure}
Changing $u(C_-)$ or $u(C_+)$ will vary the contributions of the individual pieces to the contour integral, but the integral over the entire contour $u(C) = u(C_-) \cup u(C_+)$ will remain the same.  

We replace the $u(C_-)$ contour with one consisting of straight horizontal lines running from (negative) infinity to the branch points, shown in Figure \ref{fig:deformed uCminus}, and we denote by $C_-^\pm$ the piece of $u(C_-)$ containing $u=\pm i$.
\begin{figure}
	\subfigure[The deformed $u(C_-)$]{\includegraphics{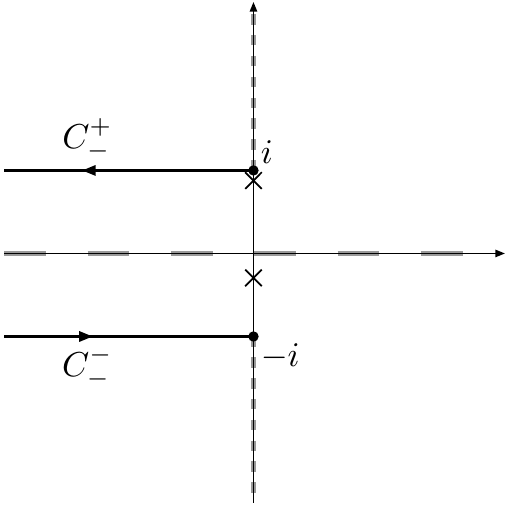}\label{fig:deformed uCminus}}
	\subfigure[The transitional $u(C_+)$]{\includegraphics{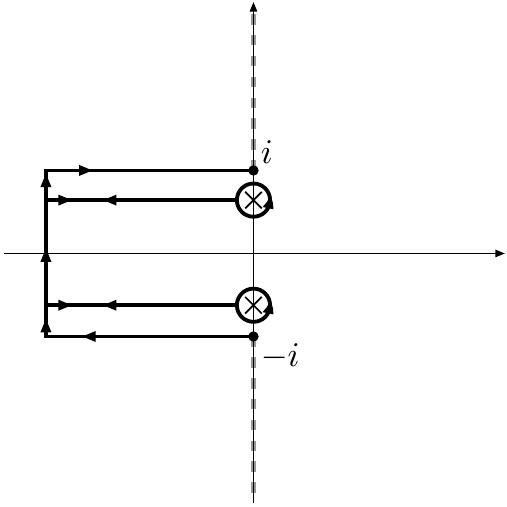}\label{fig:trans uCplus}}
	\subfigure[The deformed $u(C_+)$]{\includegraphics{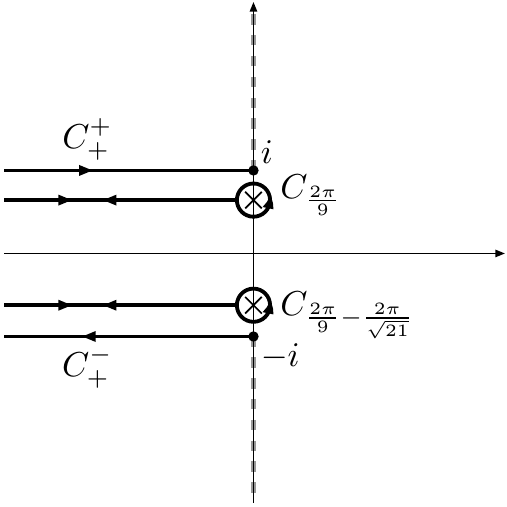}\label{fig:deformed uCplus}}
\caption{Example poles of the integrand and associated deformation of the contour $u(C)$ ($\rho = \frac{1}{\sqrt{21}}$, $\eta = \frac{2\pi}{9}$)}
\label{fig:deformed uC contours}
\end{figure}

Turning to $u(C_+)$, we exchange it for a collection of horizontal and vertical lines together with small loops around the poles of the integrand, displayed in Figure \ref{fig:trans uCplus}.  (Note that the endpoints of $u(C_-)$ and this transitional $u(C_+)$ match.)  The final deformation comes from pulling these vertical components of $u(C_+)$ out to (negative) infinity, allowable due to the exponential decay of the integrand of $S_\alpha(x,\eta)$ for $x > 0$ and $\alpha \eta \not\equiv -\pi$ or $0 \mod{2\pi \rho}$.  We label the keyhole contour surrounding the pole at $u = i \sin(\varphi)$ by $C_\varphi$, and we call the purely horizontal pieces $C_+^\pm$ depending on which branch point $u=\pm i$ they contain.  The resulting contour is shown in Figure \ref{fig:deformed uCplus}.

The contribution to $S_\alpha(x,\eta)$ coming from the $C_\varphi$ pieces of $u(C_+)$ reduces to a residue calculation, for the contributions coming from integration to and from negative infinity along the horizontal components sum to zero.  Noting that $\sigma = +1$ along these contours, a simple calculation shows the residue of the integrand of $S_\alpha(x,\eta)$ at one of these poles is
\begin{multline}
\Res_{u = i \sin(\varphi)} \left\{ \frac{e^{xu}}{4\pi} \cot\!\left[ \frac{\frac{\pi}{2} + \alpha \eta + i \log\!\left[ u + \left(u^2 + 1\right)^\frac{1}{2} \right]}{2\rho} \right] \frac{1}{\left(u^2 + 1\right)^\frac{1}{2}} \right\} \\
= \frac{\rho}{2\pi i} \, \exp\!\left[ i x \sin(\varphi) \right] \; .
\end{multline}
An application of the residue theorem then gives the following.
\begin{lemma}\label{thm:C_varphi contribution}
The contribution to $S_\alpha(x,\eta)$ of one of the pole-enclosing contours $C_\varphi$ is
\begin{multline}\label{eq:C_varphi contribution}
\frac{1}{4\pi} \int_{C_\varphi} e^{xu} \cot\!\left[ \frac{\frac{\pi}{2} + \alpha \eta + i \log\!\left[ u + \sigma \left(u^2 + 1\right)^\frac{1}{2} \right]}{2\rho} \right] \frac{du}{\sigma \left(u^2 + 1\right)^\frac{1}{2}} \\
= \rho \exp\!\left[i x \sin(\varphi) \right] \; .  \qed
\end{multline}
\end{lemma}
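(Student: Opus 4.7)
The lemma is fundamentally a residue computation on the keyhole contour $C_\varphi$, which consists of two horizontal rails running to and from $-\infty$ together with a small positively oriented loop enclosing the pole $u_\varphi \defeq i\sin(\varphi)$. Throughout $C_\varphi$ we have $\sigma = +1$, since $C_\varphi \subset u(C_+)$.

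First, I would argue that the two horizontal rails contribute zero. On the $v_+$-sheet, the branch cut of $\log[v]$ (which lies on $\{v \leqslant 0\} \subset v_-(\mathbb{C})$) is absent, and the square-root branch cuts of $(u^2+1)^{1/2}$ sit on the imaginary axis outside of $[-i,i]$. Hence the integrand is single-valued and holomorphic in the region $\Re[u] \ll 0$ traversed by the tails of the rails, and since the two rails are traversed in opposite directions they can be deformed onto one another and cancel. This reduces the integral to $2\pi i$ times the residue at $u_\varphi$.

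Next, I would evaluate $v_+(u_\varphi) = u_\varphi + (u_\varphi^2+1)^{1/2}$. Since $u_\varphi^2 + 1 = \cos^2(\varphi)$, the principal square root gives $(u_\varphi^2+1)^{1/2} = \cos(\varphi)$ (for the range of $\varphi$ in which $e^{i\varphi}$ lies on the $v_+$-sheet, so that $\cos(\varphi) > 0$), yielding $v_+(u_\varphi) = e^{i\varphi}$. Then $i\log[v_+(u_\varphi)] = -\varphi$, so the cot argument becomes $\frac{\frac{\pi}{2} + \alpha\eta - \varphi}{2\rho}$, which is an integer multiple of $\pi$ precisely because $\varphi \in \mathcal{P}_\rho(\alpha\eta)$. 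This confirms $u_\varphi$ is indeed a simple pole of the integrand enclosed by $C_\varphi$.

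Finally, I would compute the residue. Using the chain-rule identity $\frac{d}{du}\log[u + (u^2+1)^{1/2}] = (u^2+1)^{-1/2}$, the derivative of the cot argument at $u_\varphi$ equals $\frac{i}{2\rho\cos(\varphi)}$, so the residue of the cotangent factor is the reciprocal, $\frac{2\rho\cos(\varphi)}{i}$. Multiplying by the remaining regular factor $\frac{e^{xu_\varphi}}{(u_\varphi^2+1)^{1/2}} = \frac{e^{ix\sin(\varphi)}}{\cos(\varphi)}$ gives total residue $\frac{2\rho e^{ix\sin(\varphi)}}{i}$, and applying the residue theorem with the outer factor $\frac{1}{4\pi}$ yields $\frac{1}{4\pi} \cdot 2\pi i \cdot \frac{2\rho e^{ix\sin(\varphi)}}{i} = \rho\exp[ix\sin(\varphi)]$, as claimed. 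The main (and really only) non-routine step is the rail cancellation together with tracking the branch choices so that $(u_\varphi^2+1)^{1/2} = \cos(\varphi)$ comes out with the correct sign; once that bookkeeping is done, the residue itself is mechanical.
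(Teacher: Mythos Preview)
Your proof is correct and follows essentially the same approach as the paper: the paper simply asserts that the horizontal rails cancel, states the residue of the full integrand at $u=i\sin(\varphi)$ as $\frac{\rho}{2\pi i}\,e^{ix\sin(\varphi)}$, and invokes the residue theorem. Your write-up supplies the details behind that ``simple calculation''---the verification that $v_+(u_\varphi)=e^{i\varphi}$, the chain-rule computation of $f'(u_\varphi)=\frac{i}{2\rho\cos\varphi}$, and the sign bookkeeping ensuring $(u_\varphi^2+1)^{1/2}=\cos\varphi>0$ for $\varphi\in(-\tfrac{\pi}{2},\tfrac{\pi}{2})$---but the structure is identical.
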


We now work with the horizontal pieces $C_\pm^\pm$.  Treating the two contours $C_\pm^\beta$ together, where $\beta = \pm 1$ is the sign of the crossed branch point, we write
\begin{multline}\label{eq:horizontal part of S_alpha 0}
\frac{1}{4\pi} \int_{C_-^\beta \cup C_+^\beta} e^{xu} \cot\!\left[ \frac{\frac{\pi}{2} + \alpha \eta + i \log\!\left[ u + \sigma \left(u^2 + 1\right)^\frac{1}{2} \right]}{2\rho} \right] \frac{du}{\sigma \left(u^2 + 1\right)^\frac{1}{2}} \\
= - \frac{\beta}{4\pi} \int_{-\infty + \beta i}^{\beta i} e^{xu} \cot\!\left[ \frac{\frac{\pi}{2} + \alpha \eta + i \log\!\left[ u - \left(u^2 + 1\right)^\frac{1}{2} \right]}{2\rho} \right] \frac{du}{- \left(u^2 + 1\right)^\frac{1}{2}} \\
\mbox{} - \frac{\beta}{4\pi} \int_{\beta i}^{-\infty + \beta i} e^{xu} \cot\!\left[ \frac{\frac{\pi}{2} + \alpha \eta + i \log\!\left[ u + \left(u^2 + 1\right)^\frac{1}{2} \right]}{2\rho} \right] \frac{du}{\left(u^2 + 1\right)^\frac{1}{2}} \; .
\end{multline}
We make the change of variables
\begin{equation}
s(u) \defeq \sigma(u) \left( \beta i - u \right)^\frac{1}{2}
\end{equation}
along these contours, again taking the principal branch of the square root and correcting with $\sigma$ where necessary.  In the language of Riemann surfaces, this new variable $s$ is a local uniformizer at the branch point $u = \beta i$, which is to say it unravels the doubling action of the map $u(v)$ at that point.  The inverse is given by
\begin{equation}
u(s) = \beta i - s^2 \; .
\end{equation}
Under this change, \eqref{eq:horizontal part of S_alpha 0} becomes
\begin{equation}\label{eq:horizontal part of S_alpha}
\begin{split}
- \frac{\beta}{2\pi} & \int_{-\infty}^0 e^{x\left(\beta i - s^2\right)} \cot\!\left[ \frac{\frac{\pi}{2} + \alpha \eta + i \log\!\left[ \beta i - s^2 - |s|\left(s^2 - 2 \beta i\right)^\frac{1}{2} \right]}{2\rho} \right] \\
&\hspace*{25em} \mbox{} \times \frac{s \, ds}{|s|\left(s^2 - 2\beta i\right)^\frac{1}{2}} \\
&\hspace*{1.5em} \mbox{} + \frac{\beta}{2\pi} \int_{0}^\infty e^{x\left(\beta i - s^2\right)} \cot\!\left[ \frac{\frac{\pi}{2} + \alpha \eta + i \log\!\left[ \beta i - s^2 + |s|\left(s^2 - 2 \beta i\right)^\frac{1}{2} \right]}{2\rho} \right] \\
&\hspace*{25em} \mbox{} \times \frac{s \, ds}{|s|\left(s^2 - 2\beta i\right)^\frac{1}{2}} \; ,
\end{split}
\end{equation}
which we rewrite as
\begin{equation}
\begin{split}
&\frac{\beta \, e^{x \beta i}}{2\pi} \int_{-\infty}^\infty e^{-x s^2} \cot\!\left[ \frac{\frac{\pi}{2} + \alpha \eta + i \log\!\left[ \beta i - s^2 + s\left(s^2 - 2 \beta i\right)^\frac{1}{2} \right]}{2\rho} \right] \\
&\hspace*{25em} \mbox{} \times \frac{ds}{\left(s^2 - 2\beta i\right)^\frac{1}{2}} \; .
\end{split}
\end{equation}
The multiplicative factor of $\beta$ appears here to correct for the direction of integration along the real axis, which varies with the branch point at which we localize; see Figure \ref{fig:contours in s-domain}.

\begin{remark}\label{rmk:method of images}
Pausing for a moment to consider the case where $\rho = \frac{1}{N}$ for $N$ a positive integer, we can see how our analysis reduces to what one expect from the method of images.  Consider the $\alpha = -1$ term in \eqref{eq:horizontal part of S_alpha}.  We can factor out a negative sign in the cotangent to obtain
\begin{multline}
\cot\!\left[ \frac{N}{2} \left( \frac{\pi}{2} - \eta + i \log\!\left[ \beta i - s^2 + s\left(s^2 - 2 \beta i\right)^\frac{1}{2} \right] \right) \right] \\
= - \cot\!\left[ \frac{N}{2} \left( -\frac{\pi}{2} + \eta + i \log\!\left[ -\beta i + s^2 + s\left(s^2 - 2 \beta i\right)^\frac{1}{2} \right] \right) \right] \; .
\end{multline}
Using the fact that cotangent is $\pi$-periodic, we have
\begin{equation}
\begin{split}
&- \cot\!\left[ \frac{N}{2} \left( -\frac{\pi}{2} + \eta + i \log\!\left[ -\beta i + s^2 + s\left(s^2 - 2 \beta i\right)^\frac{1}{2} \right] \right) + N\pi \right] \\
&\hspace*{2em}= - \cot\!\left[ \frac{N}{2} \left( \frac{\pi}{2} + \eta + i \log[-1] + i \log\!\left[ -\beta i + s^2 + s\left(s^2 - 2 \beta i\right)^\frac{1}{2} \right] \right) \right] \\
&\hspace*{2em}= - \cot\!\left[ \frac{N}{2} \left( \frac{\pi}{2} + \eta + i \log\!\left[ \beta i - s^2 - s\left(s^2 - 2 \beta i\right)^\frac{1}{2} \right] \right) \right] \; .
\end{split}
\end{equation}
We now substitute this into \eqref{eq:horizontal part of S_alpha} and make the change of variables $s \leadsto -s$:
\begin{equation}
\begin{split}
&\frac{\beta \, e^{x \beta i}}{2\pi} \int_{-\infty}^\infty e^{-x s^2} \cot\!\left[ \frac{N}{2} \left( \frac{\pi}{2} - \eta + i \log\!\left[ \beta i - s^2 + s\left(s^2 - 2 \beta i\right)^\frac{1}{2} \right] \right) \right] \\
&\hspace*{27em} \mbox{} \times \frac{ds}{\left(s^2 - 2\beta i\right)^\frac{1}{2}} \\
&\hspace*{2em} = - \frac{\beta \, e^{x \beta i}}{2\pi} \int_{-\infty}^\infty e^{-x s^2} \cot\!\left[ \frac{N}{2} \left( \frac{\pi}{2} + \eta + i \log\!\left[ \beta i - s^2 + s\left(s^2 - 2 \beta i\right)^\frac{1}{2} \right] \right) \right] \\
&\hspace*{27em} \mbox{} \times \frac{ds}{\left(s^2 - 2\beta i\right)^\frac{1}{2}} \; .
\end{split}
\end{equation}
Thus, when we sum \eqref{eq:horizontal part of S_alpha} over $\alpha$ and $\beta$, the horizontal parts $C_\pm^\pm$ of the contour sum to zero, and we are left with only the contributions from the $C_\varphi$ contours calculated previously.  Hence, $S(x,\eta)$ is a sum of the terms from Lemma \ref{thm:C_varphi contribution}, and the Schr\"odinger propagagtor is
\begin{equation}
K_{e^{it\Delta}}(r_1,\theta_1,r_2,\theta_2) = -\frac{1}{4\pi i t} \sum_{j=0}^{N-1} \exp\!\left[ \frac{r_1^2 + r_2^2 - 2r_1 r_2 \cos\left( \theta_1 - \theta_2 - \frac{2\pi j}{N} \right)}{4it} \right] \; .
\end{equation}
This is precisely what one obtains from the method of images.  $\diamond$
\end{remark}

\subsection{Preliminary asymptotics}\label{sect:preliminary asymptotics}
At this stage, we can obtain a preliminary asymptotic expansion for the fundamental solution in our regime.  We start with the integral \eqref{eq:horizontal part of S_alpha}, which is amenable to the usual method of saddle points \cite{Olv}.  Its application generates the expansion
\begin{multline}
\frac{\beta \, e^{x \beta i}}{2\pi} \int_{-\infty}^\infty e^{-x s^2} \cot\!\left[ \frac{\frac{\pi}{2} + \alpha \eta + i \log\!\left[ \beta i - s^2 + s\left(s^2 - 2 \beta i\right)^\frac{1}{2} \right]}{2\rho} \right] \frac{ds}{\left(s^2 - 2\beta i\right)^\frac{1}{2}} \\
= \frac{\beta \, e^{\beta \left(x + \frac{\pi}{4}\right) i}}{\left(2\pi\right)^\frac{1}{2}} \, \cot\!\left[ \frac{\left(1 - \beta\right)\frac{\pi}{2} + \alpha \eta}{2\rho} \right] x^{-\frac{1}{2}} + \mathrm{O}\!\left(x^{-\frac{3}{2}}\right) \qquad \text{as $x \To \infty$} \; .
\end{multline}
To obtain an expansion for $S_\alpha(x,\eta)$, we sum over $\beta = \pm 1$ and add to the result the contributions coming from the $C_\varphi$ contours.
\begin{multline}
S_\alpha(x,\eta) = \sum_{\varphi \in \mathcal{P}_\rho(\alpha\eta) \cap \left(-\frac{\pi}{2},\frac{\pi}{2}\right)} \rho \exp[ix \sin(\varphi)] \\
\mbox{} + \left(2\pi x\right)^{-\frac{1}{2}} \left\{ \cot\!\left[\frac{\alpha \eta}{2\rho}\right] e^{\left(x + \frac{\pi}{4}\right)i} - \cot\!\left[\frac{\alpha \eta + \pi}{2 \rho}\right] e^{-\left(x+\frac{\pi}{4}\right)i} \right\} \\
\mbox{} + \mathrm{O}\!\left(x^{-\frac{3}{2}}\right) \qquad \text{as $x \To \infty$} \; .
\end{multline}
Lastly, we sum over $\alpha = \pm 1$, substitute the definitions of $x \defeq \frac{r_1 r_2}{2t}$ and $\eta \defeq \theta_1 - \theta_2$, and multiply by the leading factor $-\frac{1}{4\pi \rho i t} \exp\!\left[\frac{r_1^2+r_2^2}{4it}\right]$ from \eqref{eq:std cone SSK} to obtain the leading order asymptotics of the Schr\"odinger kernel.
\begin{proposition}
Uniformly away from $\theta_1 - \theta_2 \equiv -\pi$, $0$, and $\pi \mod{2\pi \rho}$, the Schr\"odinger propagator has asymptotics
\begin{multline}
K_{e^{it\Delta}}(r_1,\theta_1,r_2,\theta_2) \\
= \frac{1}{t} \left\{ -\frac{1}{4 \pi i} \sum_{-\pi < \pm(\theta_1 - \theta_2) + 2\pi \rho j < 0} \exp\!\left[ \frac{r_1^2 + r_2^2 - 2 r_1 r_2 \, \cos\!\left( \theta_1 - \theta_2 \pm 2\pi\rho j \right)}{4i t} \right] \right. \\
\begin{split}
&\mbox{} + \left( \frac{r_1 r_2}{2t} \right)^{-\frac{1}{2}} \left( \frac{i}{16 \pi^3 \rho^2} \right)^\frac{1}{2} \exp\!\left[ \frac{\left(r_1 + r_2\right)^2}{4 i t} \right] \left\{ \cot\!\left[ \frac{\theta_1 - \theta_2 + \pi}{2\rho} \right]  \right. \\
&\hspace*{22em} \left. \mbox{} - \cot\!\left[ \frac{ \theta_1 - \theta_2 - \pi}{2\rho} \right] \right\} 
\end{split}\\
\left. \mbox{} +  \mathrm{O}\!\left( \left( \frac{r_1 r_2}{2t} \right)^{-\frac{3}{2}}\right) \right\} \qquad \text{as $\dfrac{r_1 r_2}{2t} \To \infty$} \; .
\end{multline}
\end{proposition}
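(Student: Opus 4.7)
The plan is to extract the asymptotics of $S_\alpha(x,\eta)$ directly from the decomposition of $u(C)$ constructed in Section \ref{sect:contour deformations}, then sum over $\alpha = \pm 1$ and multiply by the prefactor of \eqref{eq:std cone SSK}. After the deformation, $S_\alpha$ splits into two kinds of contributions: residue pieces from the pole-enclosing loops $C_\varphi$ with $\varphi \in \mathcal{P}_\rho(\alpha\eta) \cap (-\tfrac{\pi}{2},\tfrac{\pi}{2})$ (the poles at $\varphi = \pm\tfrac{\pi}{2}$ are on the branch points and are excluded precisely by the hypothesis $\theta_1 - \theta_2 \not\equiv -\pi,0,\pi \pmod{2\pi\rho}$), and horizontal-line contributions over $C_-^\beta \cup C_+^\beta$ for $\beta = \pm 1$ after the local uniformization $s = \sigma(\beta i - u)^{1/2}$. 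Lemma \ref{thm:C_varphi contribution} already handles the former, so the technical work is entirely in the saddle point analysis of the latter.

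Having applied the local uniformization, the horizontal integral \eqref{eq:horizontal part of S_alpha} has the form $\frac{\beta e^{i\beta x}}{2\pi} \int_{-\infty}^\infty e^{-xs^2} A_{\alpha,\beta}(s)\, ds$ with amplitude
\[
A_{\alpha,\beta}(s) = \frac{1}{(s^2 - 2\beta i)^{1/2}} \cot\!\left[\frac{\tfrac{\pi}{2} + \alpha\eta + i\log[\beta i - s^2 + s(s^2 - 2\beta i)^{1/2}]}{2\rho}\right].
\]
Under the assumption on $\eta$, no pole of the cotangent collides with the saddle at $s = 0$, so $A_{\alpha,\beta}$ is smooth in a neighborhood of the origin. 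A standard application of Laplace's method then gives $\int e^{-xs^2} A_{\alpha,\beta}(s)\, ds = (\pi/x)^{1/2} A_{\alpha,\beta}(0) + \mathrm{O}(x^{-3/2})$. Evaluating at $s=0$ using $i\log[\beta i] = -\beta\pi/2$ and $(-2\beta i)^{-1/2} = e^{i\beta\pi/4}/\sqrt{2}$ yields the leading coefficient $\beta e^{i\beta(x + \pi/4)}(2\pi x)^{-1/2}\cot\!\left[\tfrac{(1-\beta)\pi/2 + \alpha\eta}{2\rho}\right]$ displayed in the statement.

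Summing over $\beta = \pm 1$ collapses the two branch-point contributions to the bracketed expression $\cot[\alpha\eta/(2\rho)]e^{i(x+\pi/4)} - \cot[(\alpha\eta+\pi)/(2\rho)]e^{-i(x+\pi/4)}$, and a subsequent sum over $\alpha = \pm 1$ uses the parity $\cot[-\alpha\eta/(2\rho)] = -\cot[\alpha\eta/(2\rho)]$ and the translation $\cot[(-\eta + \pi)/(2\rho)] = -\cot[(\eta - \pi)/(2\rho)]$ to consolidate the diffractive terms into the single cotangent difference appearing in the proposition. For the geometric part, summing $\rho\exp[ix\sin\varphi]$ over $\varphi \in \mathcal{P}_\rho(\alpha\eta)\cap(-\tfrac{\pi}{2},\tfrac{\pi}{2})$ for both $\alpha$, reparameterizing $\varphi = \tfrac{\pi}{2} \pm \eta + 2\pi\rho j$ (which turns the constraint into $-\pi < \pm\eta + 2\pi\rho j < 0$), and applying $\sin(\tfrac{\pi}{2} \pm \eta + 2\pi\rho j) = \cos(\eta \pm 2\pi\rho j)$ converts $ix\sin\varphi$ combined with the prefactor phase $\exp[(r_1^2+r_2^2)/(4it)]$ into the desired method-of-images exponential $\exp[(r_1^2 + r_2^2 - 2r_1 r_2 \cos(\theta_1 - \theta_2 \pm 2\pi\rho j))/(4it)]$.

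The main obstacle is not analytic but bookkeeping: one must carefully track the orientation factor $\beta$, the sheet correction $\sigma$, the branches of the square root under the uniformization, and the logarithm branch chosen in \eqref{eq:S_alpha(x,eta) via u}, since any sign error in these propagates through both the amplitude evaluation at $s = 0$ and the identification of pole positions in $\mathcal{P}_\rho(\pm\eta)$. The error term is obtained by bounding the standard $O(x^{-3/2})$ remainder of Laplace's method uniformly on compact subsets of the $\eta$-complement of the singular directions, which is straightforward once the amplitude is verified to be uniformly smooth there.
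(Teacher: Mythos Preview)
Your proposal is correct and follows essentially the same route as the paper: both use the contour decomposition of Section \ref{sect:contour deformations}, invoke Lemma \ref{thm:C_varphi contribution} for the residue terms, apply the standard saddle point/Laplace method to the uniformized horizontal integral \eqref{eq:horizontal part of S_alpha}, and then sum over $\beta$ and $\alpha$ before restoring the prefactor of \eqref{eq:std cone SSK}. Your write-up is in fact more explicit than the paper's at the step of evaluating $A_{\alpha,\beta}(0)$ and consolidating the $\alpha$-sum, but the underlying argument is the same.
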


Note that the coefficients in this expansion diverge as a pole moves toward one of the branch points, i.e.\ as $\theta_1 - \theta_2$ approaches $-\pi$, $0$, or $\pi \mod{2\pi\rho}$.  In particular, the singularity at $\theta_1 - \theta_2 = 0$ remains because the asymptotics of $S_\alpha(x,\eta)$ are invalid when $\eta = 0$.  These singularities arise due to the phase of the integrals \eqref{eq:horizontal part of S_alpha} being independent of $\eta = \theta_1 - \theta_2$, unlike the location of the poles.  To develop uniform asymptotics as we approach this interface, we must treat the part of the integrand causing this divergence separately.  Thus, we return to the integral \eqref{eq:horizontal part of S_alpha}.

\subsection{Uniform asymptotics approaching the interface}\label{sect:away from geo/diff interface}
Let $A_{\alpha,\beta}(s)$ be the amplitude of this integral \eqref{eq:horizontal part of S_alpha}, i.e.\
\begin{equation}
A_{\alpha,\beta}(s) \defeq \frac{\beta \, e^{x \beta i}}{2\pi} \cot\!\left[ \frac{\frac{\pi}{2} + \alpha \eta + i \log\!\left[ \beta i - s^2 + s\left(s^2 - 2 \beta i\right)^\frac{1}{2} \right]}{2\rho} \right] \frac{1}{\left(s^2 - 2\beta i\right)^\frac{1}{2}} \; .
\end{equation}
Its poles are located at $s = \sigma_\varphi \left( \beta i - i \sin(\varphi) \right)^\frac{1}{2}$, where $\sigma_\varphi \defeq \sgn(\cos(\varphi))$ distinguishes from which $v_\pm$-sheet the pole originally comes and $\varphi$ is an element of $\mathcal{P}_\rho(\alpha\eta) = \left\{ \frac{\pi}{2} + \alpha \eta + 2\pi\rho k ;\; k \in \mathbb{Z} \right\}$.
\begin{figure}
	\subfigure[The $s$-plane for $\beta = -1$]{\includegraphics{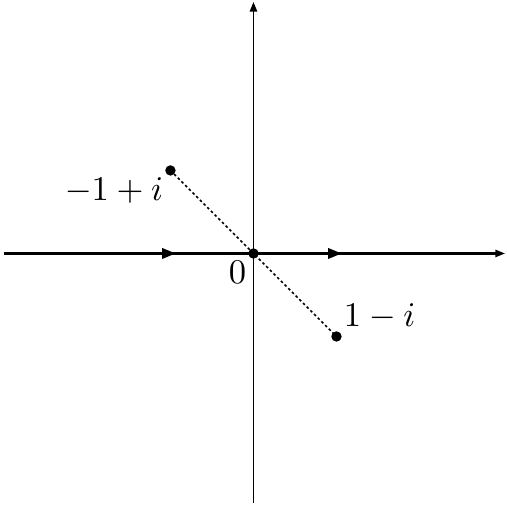}}
	\subfigure[The $s$-plane for $\beta = 1$]{\includegraphics{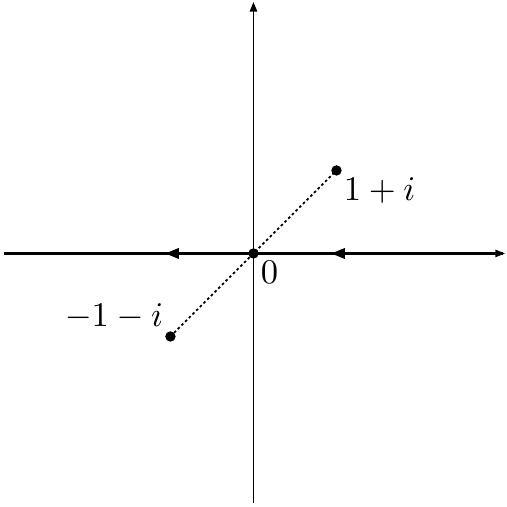}}
\caption{The $s$-planes and location of poles of the integrand}
\label{fig:contours in s-domain}
\end{figure}
We can therefore write $A_{\alpha,\beta}(s)$ as the sum
\begin{equation}\label{eq:defn of B_alpha,beta}
A_{\alpha,\beta}(s) = \sum_{\varphi \in \mathcal{P}_\rho(\alpha\eta)} \frac{a^{(\alpha,\beta)}_\varphi}{s - \sigma_\varphi \left( \beta i - i \sin(\varphi) \right)^\frac{1}{2}} + B_{\alpha,\beta}(s) \; ,
\end{equation}
where $a_\varphi^{(\alpha,\beta)}$ is the residue of $A_{\alpha,\beta}(s)$ at the pole $s = \sigma_\varphi \left(\beta i - i \sin(\varphi) \right)^\frac{1}{2}$ and $B_{\alpha,\beta}(s)$ is holomorphic in $s$ away from $s = \pm \sqrt{2} \, e^{\beta \frac{\pi}{4} i}$.  A now familiar calculation determines the residues $a_\varphi^{(\alpha,\beta)}$ to be 
\begin{equation}
a_\varphi^{(\alpha,\beta)} = -\frac{\beta \rho \, e^{x \beta i}}{2 \pi i} \; .
\end{equation}
Hence, developing the asymptotics of $\int_{-\infty}^\infty e^{-xs^2} \, A_{\alpha,\beta}(s) \, ds$ is the same as developing those of 
\begin{equation}\label{eq:sep horizontal part of S_alpha}
\sum_{\varphi \in \mathcal{P}_\rho(\alpha\eta)} a_\varphi^{(\alpha,\beta)} \int_{-\infty}^\infty \frac{e^{-x s^2}}{s - \sigma_\varphi \left(\beta i - i \sin(\varphi) \right)^\frac{1}{2}} \, ds + \int_{-\infty}^\infty e^{-x s^2} \, B_{\alpha,\beta}(s) \, ds \; .
\end{equation}

\begin{lemma}\label{thm:pole contribution C_pm^pm}
The contribution to $\int_{-\infty}^\infty e^{-x s^2} \, A_{\alpha,\beta}(s) \, ds$ from the poles of $A_{\alpha,\beta}(s)$ is
\begin{multline}\label{eq:pole contribution C_pm^pm}
\sum_{\varphi \in \mathcal{P}_\rho(\alpha\eta)} a_\varphi^{(\alpha,\beta)} \int_{-\infty}^\infty \frac{e^{-x s^2}}{s - \sigma_\varphi \left(\beta i - i \sin(\varphi) \right)^\frac{1}{2}} \, ds \\
= - \frac{\rho}{2} \sum_{\varphi \in \mathcal{P}_\rho(\alpha\eta)} \sigma_\varphi \exp\!\left[ i x \sin(\varphi) \right] \erfc\!\left[ e^{-\beta \frac{\pi}{4}i} \, x^\frac{1}{2} \left(1 - \beta \sin(\varphi) \right)^\frac{1}{2} \right] \; ,
\end{multline}
where $\erfc[z]$ is the complementary error function,
\begin{equation}
\erfc[z] \defeq \frac{2}{\sqrt{\pi}} \int_z^\infty e^{-t^2} \, dt \; .
\end{equation}
\end{lemma}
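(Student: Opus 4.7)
The plan is to reduce the inner integrals on the left of \eqref{eq:pole contribution C_pm^pm} to a classical Cauchy transform of a Gaussian and then apply its closed-form evaluation in terms of the complementary error function. The residue $a_\varphi^{(\alpha,\beta)} = -\beta\rho e^{i\beta x}/(2\pi i)$ is already in hand, so the entire task is to compute
$$\int_{-\infty}^\infty \frac{e^{-xs^2}}{s-s_\varphi}\,ds, \qquad s_\varphi \defeq \sigma_\varphi(\beta i - i\sin(\varphi))^{1/2},$$
in closed form and rearrange.

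First I would rescale by setting $t = x^{1/2}s$; the Jacobian cancels the power of $x$ in the denominator, reducing the integrand to the canonical form $e^{-t^2}/(t-z)$ with $z = x^{1/2}s_\varphi$. Since the excluded values $\alpha\eta \equiv -\pi, 0 \pmod{2\pi\rho}$ are precisely those for which some $\sin(\varphi)$ equals $\beta$, under our hypothesis $z$ lies strictly off the real line. I would then invoke the standard identity
\begin{equation*}
\int_{-\infty}^\infty \frac{e^{-t^2}}{t-z}\, dt = i\pi\,\operatorname{sgn}(\operatorname{Im} z)\, e^{-z^2}\erfc\!\bigl(-i\,\operatorname{sgn}(\operatorname{Im} z)\, z\bigr),
\end{equation*}
which is equivalent to the integral representation of the Faddeeva function $w(z)=e^{-z^2}\erfc(-iz)$ in the upper half plane and its reflection to the lower half plane. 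A self-contained derivation simply deforms the real line vertically past $z$, picks up the associated residue, and collapses the remaining half-line integral to the definition of $\erfc$.

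Next I would spell out the branches. With the principal branch of the square root one has $s_\varphi = \sigma_\varphi\, e^{i\beta\pi/4}(1-\beta\sin(\varphi))^{1/2}$, so $s_\varphi^2 = i\beta - i\sin(\varphi)$, hence $e^{-xs_\varphi^2} = e^{-i\beta x}\,e^{ix\sin(\varphi)}$; moreover the imaginary part satisfies $\operatorname{sgn}(\operatorname{Im} s_\varphi) = \sigma_\varphi\beta$, whence
\begin{equation*}
-i\,\operatorname{sgn}(\operatorname{Im} s_\varphi)\, x^{1/2} s_\varphi = e^{-i\beta\pi/4}\, x^{1/2}(1-\beta\sin(\varphi))^{1/2},
\end{equation*}
which reproduces the argument of $\erfc$ in the claimed formula. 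Multiplying by $a_\varphi^{(\alpha,\beta)}$, the $e^{\pm i\beta x}$ factors cancel and, after using $\beta^2 = 1$, the numerical constants collapse to $-\sigma_\varphi\rho/2$; summing over $\varphi \in \mathcal{P}_\rho(\alpha\eta)$ then produces exactly the right-hand side of \eqref{eq:pole contribution C_pm^pm}.

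The only real obstacle is the branch bookkeeping in the previous step, namely verifying $\operatorname{sgn}(\operatorname{Im} s_\varphi) = \sigma_\varphi\beta$ under the principal-branch conventions of Subsection \ref{sect:vdW change of vars}, since a sign error there would both flip the overall sign of the sum and reverse the orientation of the $\erfc$ argument. Everything else is a routine application of the Faddeeva-function identity.
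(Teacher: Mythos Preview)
Your proposal is correct and follows essentially the same route as the paper's proof: both reduce the integral via the scaling $t = x^{1/2}s$ to the standard Cauchy transform of a Gaussian, invoke the $\erfc$ evaluation (the paper cites \cite{AbrSte}*{(7.1.4)} for the upper half-plane case and reflects via $s\mapsto -s$, which is exactly your unified Faddeeva-function identity with the $\operatorname{sgn}(\operatorname{Im} z)$ factor), and then carry out the same sign check $\operatorname{sgn}(\operatorname{Im} s_\varphi)=\sigma_\varphi\beta$. Your explicit branch computation $s_\varphi=\sigma_\varphi e^{i\beta\pi/4}(1-\beta\sin\varphi)^{1/2}$ spells out a step the paper leaves to the reader, but otherwise the arguments are the same.
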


\begin{proof}
We begin with calculating the integral in \eqref{eq:pole contribution C_pm^pm} via the formula \cite{AbrSte}*{(7.1.4)}
\begin{equation}
\int_{-\infty}^\infty \frac{e^{-t^2}}{t - a} \, dt = i \pi \, e^{-a^2} \, \erfc[-ia] \qquad \text{if $\Im[a] > 0$} \; .
\end{equation}
After the change of variables $t = x^\frac{1}{2} s$, this equality becomes
\begin{equation}
\int_{-\infty}^\infty \frac{e^{-x s^2}}{s - a} \, ds = i \pi \, e^{-x a^2} \, \erfc\!\left[-i x^\frac{1}{2} a\right] \qquad \text{if $\Im[a] > 0$} \; ,
\end{equation}
and making the change $s \leadsto - s$ gives us a formula for $\Im[a] < 0$:
\begin{equation}
\int_{-\infty}^\infty \frac{e^{-x s^2}}{s - a} \, ds = - i \pi \, e^{-x a^2} \, \erfc\!\left[i x^\frac{1}{2} a\right] \qquad \text{if $\Im[a] < 0$} \; .
\end{equation}
Noting that
\begin{equation}
\sgn\!\left[ \Im\!\left[ \sigma_\varphi \left(\beta i - i \sin(\varphi) \right)^\frac{1}{2} \right] \right] = \sgn\!\left[ \Im\!\left[ \sigma_\varphi \, e^{\beta \frac{\pi}{4} i} \left(1 - \beta \sin(\varphi) \right)^\frac{1}{2} \right] \right] = \sigma_\varphi \beta
\end{equation}
and applying the above formulae shows
\begin{multline}
a_\varphi^{(\alpha,\beta)} \int_{-\infty}^\infty \frac{e^{-x s^2}}{s - \sigma_\varphi \left(\beta i - i \sin(\varphi) \right)^\frac{1}{2}} \, ds \\
= - \frac{\sigma_\varphi \rho}{2} \exp\!\left[ i x \sin(\varphi) \right] \erfc\!\left[ e^{-\beta \frac{\pi}{4}i} \, x^\frac{1}{2} \left(1 - \beta \sin(\varphi) \right)^\frac{1}{2} \right] \; .
\end{multline}
Summing over $\varphi$ in $\mathcal{P}_\rho(\alpha\eta)$ proves the lemma.
\end{proof}

Recall that the complementary error function $\erfc[z]$ is entire with everywhere convergent Taylor series
\begin{equation}
\erfc[z] = 1 - \frac{2 \, e^{-z^2}}{\sqrt{\pi}} \sum_{k = 0}^\infty \frac{2^k \, z^{2k + 1}}{(2k+1)!!} \; ,
\end{equation}
and it has the asymptotic expansion 
\begin{equation}
\erfc[z] \sim \frac{e^{-z^2}}{\sqrt{\pi} z} \sum_{k=0}^\infty \frac{(-1)^k \, (2k)!}{k! \, (2z)^{2k}} \qquad \text{as $z \To \infty$ in $\left|\arg(z)\right| < \dfrac{3\pi}{4}$} \; .
\end{equation}
Thus the terms in \eqref{eq:pole contribution C_pm^pm} are $\mathrm{O}\!\left( x^{-\frac{1}{2}} \right)$ as $x \To \infty$ with $\eta$ fixed.  In particular, they are uniformly bounded for all $x$ and $\eta$ in the current regime.

Returning to the calculation, we are left with the asymptotic development of the remainder term in \eqref{eq:sep horizontal part of S_alpha}.  This is the content of the next lemma.
\begin{lemma}\label{thm:B_alpha,beta asymptotics}
The integral $\int_{-\infty}^\infty e^{-x s^2} \, B_{\alpha,\beta}(s) \, ds$ has asymptotic expansion
\begin{equation}
\int_{-\infty}^\infty e^{-x s^2} \, B_{\alpha,\beta}(s) \, ds \sim \sum_{k=0}^\infty b_{2k}^{(\alpha,\beta)} \, \Gamma\!\left( \frac{2k+1}{2} \right) \, x^{- \frac{2k+1}{2}} \qquad \text{as $x \To \infty$} \; ,
\end{equation}
where the $b_k^{(\alpha,\beta)}$ are the Taylor coefficients of $B_{\alpha,\beta}(s)$ at $s = 0$.  In particular, the leading coefficient $b_0^{(\alpha,\beta)} = b_0^{(\alpha,\beta)}(x,\eta)$ is
\begin{multline}
b_0^{(\alpha,\beta)}(x,\eta) = \frac{\beta \, e^{\beta \left(x - \frac{\pi}{4}\right)i}}{2\pi i} \left\{ - \frac{\beta}{\sqrt{2}} \, \cot\!\left[ \frac{(1 - \beta) \, \frac{\pi}{2} + \alpha \eta}{2 \rho} \right] \phantom{\sum_{\varphi \in \mathcal{P}_\rho(\alpha\eta)}}\right. \\
\left. \mbox{} - \rho \sum_{\varphi \in \mathcal{P}_\rho(\alpha\eta)} \sigma_\varphi \, \left(1 - \beta \sin(\varphi)\right)^{-\frac{1}{2}} \right\} \; .
\end{multline}
\end{lemma}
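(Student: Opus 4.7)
The plan is to apply a Watson's lemma-type expansion to the holomorphic remainder $f = B_{\alpha,\beta}$. First, I would verify that $B_{\alpha,\beta}$ is well behaved on the real line: by the decomposition \eqref{eq:defn of B_alpha,beta}, its only singularities are the branch points $s = \pm\sqrt{2}\,e^{\beta i\pi/4}$ of $(s^2 - 2\beta i)^{1/2}$ (which lie off $\mathbb{R}$), so $B_{\alpha,\beta}$ is analytic in a disk $|s| < \delta$ around the origin and admits a Taylor series $B_{\alpha,\beta}(s) = \sum_{k \geqslant 0} b_k^{(\alpha,\beta)} s^k$ there. A direct inspection of $A_{\alpha,\beta}(s)$ at infinity---using that $(s^2 - 2\beta i)^{-1/2} = \mathrm{O}(|s|^{-1})$ while the cotangent stays bounded (the imaginary part of its argument tends to $\pm\infty$ as $s \to \pm\infty$, forcing the cotangent to tend to $\mp i$)---gives $A_{\alpha,\beta}(s) = \mathrm{O}(|s|^{-1})$. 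Subtracting the simple-pole principal parts, which are themselves $\mathrm{O}(|s|^{-1})$, preserves this decay.

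For the asymptotic development proper, split the integral as $\int_{|s| \leqslant \delta} + \int_{|s| > \delta}$. The tail is controlled by $e^{-x\delta^2/2} \int_{\mathbb{R}} e^{-s^2/2} |B_{\alpha,\beta}(s)|\, ds$, hence is exponentially small in $x$. On $|s| \leqslant \delta$, substitute the truncated Taylor series $B_{\alpha,\beta}(s) = \sum_{k=0}^{2N-1} b_k^{(\alpha,\beta)} s^k + R_N(s)$ with $R_N(s) = \mathrm{O}(|s|^{2N})$, extend each monomial integration back to $\mathbb{R}$ at exponentially small cost, and apply the standard Gaussian moment identities
\begin{equation*}
\int_{-\infty}^\infty e^{-xs^2}\, s^{2k}\, ds = \Gamma\!\left(\tfrac{2k+1}{2}\right) x^{-(2k+1)/2}, \qquad \int_{-\infty}^\infty e^{-xs^2}\, s^{2k+1}\, ds = 0.
\end{equation*}
The odd coefficients vanish by parity and $R_N$ contributes $\mathrm{O}(x^{-(2N+1)/2})$, giving the stated expansion in half-integer powers of $x$.

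To extract the leading coefficient, evaluate $b_0^{(\alpha,\beta)} = B_{\alpha,\beta}(0) = A_{\alpha,\beta}(0) - \sum_\varphi a_\varphi^{(\alpha,\beta)}/\bigl(-\sigma_\varphi (\beta i - i\sin\varphi)^{1/2}\bigr)$. Using $i \log(\beta i) = -\beta\pi/2$ and the principal-branch value $(-2\beta i)^{1/2} = \sqrt{2}\, e^{-\beta i\pi/4}$ gives
\begin{equation*}
A_{\alpha,\beta}(0) = \frac{\beta\, e^{x\beta i}\, e^{\beta i\pi/4}}{2\pi\sqrt{2}}\, \cot\!\left[\frac{(1-\beta)\pi/2 + \alpha\eta}{2\rho}\right],
\end{equation*}
which rearranges, via the elementary identity $e^{i\pi/2} = \beta\, e^{\beta i\pi/2}$, into the shape $\tfrac{\beta e^{\beta(x-\pi/4)i}}{2\pi i}\bigl(-\tfrac{\beta}{\sqrt{2}}\bigr)\cot[\cdots]$. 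For the subtracted pole contributions at $s = 0$, insert $a_\varphi^{(\alpha,\beta)} = -\beta\rho e^{x\beta i}/(2\pi i)$ together with $(\beta i - i\sin\varphi)^{1/2} = e^{\beta i\pi/4}(1 - \beta\sin\varphi)^{1/2}$, and use $1/\sigma_\varphi = \sigma_\varphi$; the resulting sum is precisely the second term inside the braces of the stated expression for $b_0^{(\alpha,\beta)}$.

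The main potential pitfall is the sign and branch bookkeeping at $s = 0$---ensuring that the principal branches of the square roots and logarithm are consistent with the $\sigma_\varphi$ and $\sigma(u)$ conventions set up earlier---but this is essentially mechanical. The analytical core is standard Watson's lemma, whose tail and Taylor-remainder estimates are entirely routine in the present setting.
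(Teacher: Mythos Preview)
Your proposal is correct and follows essentially the same route as the paper: Taylor-expand $B_{\alpha,\beta}$ at $s=0$, integrate term by term against the Gaussian, and control the remainder. The only cosmetic differences are that you split off the tail $|s|>\delta$ explicitly (the paper instead invokes global boundedness of $B_{\alpha,\beta}$ on $\mathbb{R}$ to bound the full-line remainder integral directly), and you carry out in detail the $b_0^{(\alpha,\beta)}$ computation that the paper leaves to the reader.
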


\begin{proof}
We first note that $A_{\alpha,\beta}(s)$ can be written in the form
\begin{equation}
A_{\alpha,\beta}(s) = - \frac{\beta \, e^{x \beta i}}{2 \pi i \left(s^2 - 2 \beta i \right)^\frac{1}{2}} \, \frac{\exp\!\left[ i \, \frac{\frac{\pi}{2} + \alpha\eta}{\rho} \right] + \left[ \beta i - s^2 + s \left( s^2 - 2 \beta i \right)^\frac{1}{2} \right]^\frac{1}{\rho}}{\exp\!\left[ i \, \frac{\frac{\pi}{2} + \alpha\eta}{\rho} \right] - \left[ \beta i - s^2 + s \left( s^2 - 2 \beta i \right)^\frac{1}{2} \right]^\frac{1}{\rho}} \; , 
\end{equation}
where the roots have branch $[-\pi,\pi)$.  This shows that $A_{\alpha,\beta}(s)$ is a ratio of functions which are holomorphic away from branch points at $s = \pm \sqrt{2} e^{\beta \frac{\pi}{4} i}$ and poles at $s = \sigma_\varphi \left(\beta i - i \sin(\varphi \right)^\frac{1}{2}$; in particular, the $\rho$th root does not effect the holomorphy of $A_{\alpha,\beta}(s)$ since its argument is never zero for finite $s$.  Thus, when we remove the poles to form $B_{\alpha,\beta}(s)$, we are left with a function which is holomorphic away from these branch points at $s = \pm \sqrt{2} \, e^{\beta \frac{\pi}{4} i}$ and bounded for $s$ real.

Moving on the the proof of the statement, we write $B_{\alpha,\beta}(s)$ as a Taylor series with remainder,
\begin{equation}
B_{\alpha,\beta}(s) = \sum_{k = 0}^{2N-1} b_k^{(\alpha,\beta)} \, s^k + R_{2N}^{(\alpha,\beta)}(s) \; .
\end{equation}
Integration of both sides along the real line produces
\begin{equation}\label{eq:B_alpha,beta expansion}
\begin{aligned}
\int_{-\infty}^\infty e^{-x s^2} \, B_{\alpha,\beta}(s) \, ds &= \sum_{k = 0}^{2N-1} b_k^{(\alpha,\beta)} \int_{-\infty}^\infty e^{-x s^2} \, s^k \, ds + \int_{-\infty}^\infty e^{-x s^2} \, R_{2N}^{(\alpha,\beta)}(s) \, ds \\
&= \sum_{k = 0}^{N-1} b_{2k}^{(\alpha,\beta)} \, \Gamma\!\left(\frac{2k+1}{2}\right) \, x^{-\frac{2k+1}{2}} + \int_{-\infty}^\infty e^{-x s^2} \, R_{2N}^{(\alpha,\beta)}(s) \, ds \; .
\end{aligned}
\end{equation}
This leaves us to gauge the size of the remainder term.  We note that $R_{2N}^{(\alpha,\beta)}(s)$ is $\mathrm{O}\!\left(|s|^{2N-1}\right)$ since $B_{\alpha,\beta}(s)$ is holomorphic away from $s = \pm \sqrt{2} e^{\beta \frac{\pi}{4} i}$ and bounded for $s$ real, and this implies
\begin{equation}\label{eq:R^alpha,beta_N bound}
\int_{-\infty}^\infty e^{-xs^2} \, R_{2N}^{(\alpha,\beta)}(s) \, ds = \mathrm{O}\left( x^{-(N+1)} \right) \; .
\end{equation}
This proves the first statement of the lemma.  The computation of $b_0^{(\alpha,\beta)}(\eta)$ is left to the reader; it follows immediately from the definition \eqref{eq:defn of B_alpha,beta} via some simple algebraic manipulation.
\end{proof}

Combining the results of Lemma \ref{thm:pole contribution C_pm^pm} and Lemma \ref{thm:B_alpha,beta asymptotics}, we acquire an asymptotic expansion  for the integral $\int_{-\infty}^\infty e^{-x s^2} \, A_{\alpha,\beta}(s) \, ds$ in decreasing powers of $x$ as $x \To \infty$.  Unlike the previous asymptotics, this expansion remains valid as $\alpha\eta$ approaches $-\pi$ or $0 \mod{2\pi\rho}$.  Namely,
\begin{multline}\label{eq:A_alpha,beta asymptotics}
\int_{-\infty}^\infty e^{-x s^2} \, A_{\alpha,\beta}(s) \, ds \\
\sim - \frac{\rho}{2} \sum_{\varphi \in \mathcal{P}_\rho(\alpha\eta)} \sigma_\varphi \exp\!\left[ i x \sin(\varphi) \right] \erfc\!\left[ e^{-\beta \frac{\pi}{4}i} \, x^\frac{1}{2} \left(1 - \beta \sin(\varphi) \right)^\frac{1}{2} \right] \\
\mbox{} + \sum_{k=0}^\infty \Gamma\!\left(\frac{2k+1}{2}\right) b_{2k}^{(\alpha,\beta)}(x,\eta) \, x^{-\frac{2k+1}{2}} \qquad \text{as $x \To \infty$} \; .
\end{multline}
We develop the asymptotics of $S_\alpha(x,\eta)$ in this regime by summing \eqref{eq:A_alpha,beta asymptotics} over $\beta$ and including the contributions of the $C_\varphi$ contours from Lemma \ref{thm:C_varphi contribution}, giving the expansion
\begin{multline}\label{eq:S_alpha(x,eta) asymptotics}
S_\alpha(x,\eta) \sim \rho \sum_{\varphi \in \mathcal{P}_\rho(\alpha\eta)} \exp[ix \sin(\varphi)] \\
\begin{split}
&\mbox{} \times \left\{ \frac{1 + \sigma_\varphi}{2} - \frac{\sigma_\varphi}{2} \erfc\!\left[ e^{-\frac{\pi}{4}i} \, x^\frac{1}{2} \left(1 - \sin(\varphi) \right)^\frac{1}{2} \right] \right. \\
&\hspace*{10em} \left. \mbox{} - \frac{\sigma_\varphi}{2} \erfc\!\left[ e^{\frac{\pi}{4}i} \, x^\frac{1}{2} \left(1 + \sin(\varphi) \right)^\frac{1}{2} \right] \right\} 
\end{split}\\
\mbox{} + \sum_{k=0}^\infty \Gamma\!\left(\frac{2k+1}{2}\right) \left\{ b_{2k}^{(\alpha,+)}(x,\eta) + b_{2k}^{(\alpha,-)}(x,\eta) \right\} x^{-\frac{2k+1}{2}} \qquad \text{as $x \To \infty$} \; .
\end{multline}

The final steps in our calculation of the asymptotics of $K_{e^{it\Delta}}$ in the non-interactive regime is to sum over $\alpha = \pm 1$ and to substitute the resulting asymptotics for $S(x,\eta)$ into the expression \eqref{eq:std cone SSK}, converting from our dummy variables in the process.  In the interest of clarity, though, we will delay the summing over $\alpha$ and first introduce some notation for the terms appearing in the expansions obtained by substituting the $S_\alpha(x,\eta)$ asymptotics into \eqref{eq:std cone SSK}.  The first of these is a more explicit version of $\sigma_\varphi$,
\begin{equation}
\Sigma^\alpha_j(\theta_1,\theta_2) \defeq \sgn\!\left( \cos\!\left( \frac{\pi}{2} + \alpha (\theta_1 - \theta_2) + 2\pi\rho j \right) \right) \; .
\end{equation}
We also introduce a name for the terms one would obtain purely from the formal application of the method of images,
\begin{multline}
\tilde{\mathsf{G}}_0^\alpha(j;t,r_1,\theta_1,r_2,\theta_2) \\
\defeq - \frac{1}{4\pi i} \, \exp\!\left[ \frac{r_1^2 + r_2^2 - 2r_1 r_2 \cos(\theta_1 - \theta_2 + \alpha \, 2\pi\rho j)}{4it} \right] \; ,
\end{multline}
and we let $\mathsf{G}_0^\alpha(t,r_1,\theta_1,r_2,\theta_2)$ be their sum
\begin{equation}
\mathsf{G}_0^\alpha(t,r_1,\theta_1,r_2,\theta_2) \defeq \sum_{-\pi < \alpha(\theta_1 - \theta_2) + 2\pi\rho j < 0} \tilde{\mathsf{G}}_0^\alpha(j) \; .
\end{equation}
The indices of summation here come from the set of pole phases, which we recall is
\begin{equation}
\mathcal{P}_\rho\!\left(\alpha(\theta_1 - \theta_2) \right) \defeq \left\{ \frac{\pi}{2} + \alpha(\theta_1 - \theta_2) + 2\pi\rho j ;\; j \in \mathbb{Z} \right\} \cap [-\pi,\pi) \; ,
\end{equation}
intersected with the interval $\left(-\frac{\pi}{2},\frac{\pi}{2}\right)$.  This restricts the phases to those whose corresponding poles are surrounded by the $C_\varphi$ contours.  We next define a function representing the sum of complementary error functions appearing in the expansion \eqref{eq:S_alpha(x,eta) asymptotics},
\begin{multline}
\tilde{\mathsf{G}}^\alpha_{-\frac{1}{2}}(j;t,r_1,\theta_1,r_2,\theta_2) \\
\defeq \frac{\Sigma^\alpha(j;\theta_1,\theta_2)}{8\pi i} \left( \frac{r_1 r_2}{2t} \right)^\frac{1}{2} \exp\!\left[ \frac{r_1^2 + r_2^2 - 2r_1 r_2 \cos(\theta_1 - \theta_2 + \alpha \, 2\pi\rho j)}{4it} \right] \\
\mbox{} \times \left\{\erfc\!\left[  \left(-\frac{ir_1 r_2}{2t}\right)^\frac{1}{2} \left(1 - \cos(\theta_1 - \theta_2 +\alpha \, 2\pi \rho j) \right)^\frac{1}{2} \right] \right. \\
\left. \mbox{} + \erfc\!\left[ \left(\frac{i r_1 r_2}{2t}\right)^\frac{1}{2} \left(1 + \cos(\theta_1 - \theta_2 +\alpha \, 2\pi \rho j) \right)^\frac{1}{2} \right] \right\} \; ,
\end{multline}
and similarly their sum is denoted by
\begin{equation}
\mathsf{G}_{-\frac{1}{2}}^\alpha(t,r_1,\theta_1,r_2,\theta_2) \defeq \sum_{-\frac{3\pi}{2} \leqslant \alpha(\theta_1 - \theta_2) + 2\pi\rho j < \frac{\pi}{2}} \tilde{\mathsf{G}}_{-\frac{1}{2}}^\alpha(j;t,r_1,\theta_1,r_2,\theta_2) \; .
\end{equation}
This sum ranges over all of the corresponding phases in $\mathcal{P}_\rho\!\left(\alpha(\theta_1 - \theta_2)\right)$, producing the wider range of possible indices $j$.  Note that we have inserted a factor of $\left( \frac{r_1 r_2}{2t} \right)^\frac{1}{2}$ in the definition of $\tilde{\mathsf{G}}_{-\frac{1}{2}}^\alpha$; this is just a psychological convenience.  It makes $\tilde{\mathsf{G}}^\alpha_{-\frac{1}{2}}$ into a $\mathrm{O}(1)$ function in $\frac{r_1r_2}{2t}$, which allows us to write the decay in $\frac{r_1 r_2}{2t}$ of each term explicitly in the asymptotic expansion.  Lastly, we introduce
\begin{multline}
\mathsf{D}^\alpha_{-\frac{2k+1}{2}}(t,r_1,\theta_1,r_2,\theta_2) \defeq - \frac{\Gamma\!\left(\frac{2k+1}{2}\right)}{8 \pi^2 \rho t \cdot (2k)!} \sum_{\beta = \pm 1} \exp\!\left[ \frac{(r_1 - \beta r_2)^2}{4it} \right] \\
\mbox{} \times \frac{d^{2k}}{ds^{2k}} \left\{ \cot\!\left[ \frac{\frac{\pi}{2} + \alpha(\theta_1 - \theta_2) + i \log\!\left[\beta i - s^2 + s \left(s^2 - 2 \beta i \right)^\frac{1}{2} \right]}{2\rho} \right] \right. \\
\left. \left. \mbox{} - \sum_{- \frac{3\pi}{2} \leqslant \alpha (\theta_1 - \theta_2) + 2\pi \rho j < \frac{\pi}{2}} \frac{i \rho}{s - \Sigma^\alpha_j(\theta_1,\theta_2) \left(\beta i - i \cos(\theta_1 - \theta_2 + \alpha \, 2\pi \rho j \right)^\frac{1}{2}} \right\} \right\vert_{s=0}
\end{multline}
to represent the coefficients of $x^{-\frac{2k+1}{2}}$ in the asymptotic expansions of the integrals $\int_{-\infty}^\infty e^{-x s^2} B_{\alpha,\beta}(s) \, ds$.  
%

While the functions which arise from the poles, that is $\mathsf{G}^\alpha_0(t,r_1,\theta_1,r_2,\theta_2)$ and $\mathsf{G}^\alpha_{-\frac{1}{2}}(t,r_1,\theta_1,r_2,\theta_2)$, are easily seen to be uniformly bounded for $\alpha\eta \not\equiv -\pi$ or $0 \mod{2\pi\rho}$, we emphasize that the terms $\mathsf{D}^\alpha_{-\frac{2k+1}{2}}(t,r_1,\theta_1,r_2,\theta_2)$ arising from the remainder terms are bounded in the same regime by our construction.  That is, inspection shows they uniformly bounded (and smooth) in $t$, $r_1$, and $r_2$, and since we have removed the poles arising from change in $\theta_1$ and $\theta_2$, there are no singularities in these variables.  The piecewise-smoothness of all these functions then follows because they were smooth away from the poles, but jumps that occur when poles join or leave the set of pole phases or cross a branch point remain.

To conclude this section, we state the asymptotics of $K_{e^{it\Delta}}$ in the following theorem, suppressing the dependence of the above functions on the variables $t$, $(r_1,\theta_1)$, and $(r_2,\theta_2)$.
\begin{theorem}\label{thm:propagator asymptotics away from interface}
For $\theta_1 - \theta_2 \not\equiv -\pi$, $0$, or $\pi \mod{2\pi \rho}$, the Schwartz kernel of $e^{it\Delta}$ has the asymptotic expansion
\begin{multline}\label{eq:asymptotics away from interface}
K_{e^{it\Delta}}(r_1,\theta_1,r_2,\theta_2) \\
\mbox{} \sim \frac{1}{t} \sum_{\alpha = \pm 1} \left\{ \mathsf{G}_0^\alpha + \mathsf{G}^\alpha_{-\frac{1}{2}} \left(\frac{r_1 r_2}{2t} \right)^{-\frac{1}{2}}  + \sum_{k=0}^\infty \mathsf{D}^\alpha_{-\frac{2k+1}{2}} \left(\frac{r_1 r_2}{2t}\right)^{-\frac{2k + 1}{2}} \right\}\\
\text{as $\dfrac{r_1 r_2}{2t} \To \infty$} \; . \qed
\end{multline}
\end{theorem}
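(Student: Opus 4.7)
The plan is to combine the identity
\begin{equation*}
K_{e^{it\Delta}}(r_1,\theta_1,r_2,\theta_2) = -\frac{\exp\!\left[\frac{r_1^2+r_2^2}{4it}\right]}{4\pi i\rho t}\bigl(S_+(x,\eta) + S_-(x,\eta)\bigr)
\end{equation*}
coming from \eqref{eq:std cone SSK} with the asymptotic expansion \eqref{eq:S_alpha(x,eta) asymptotics} already derived for each $S_\alpha$. The hypothesis $\theta_1-\theta_2 \not\equiv -\pi, 0, \pi \bmod 2\pi\rho$ is exactly what is required for \eqref{eq:S_alpha(x,eta) asymptotics} to hold for both choices of $\alpha = \pm 1$ simultaneously: for each $\alpha$ that expansion was derived under the constraint $\alpha\eta \not\equiv -\pi, 0 \bmod 2\pi\rho$, and the two sets of forbidden values combine to the stated exclusion. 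All of the analytic work is therefore in hand, and what remains is to match the three species of terms in \eqref{eq:S_alpha(x,eta) asymptotics} with the three types of terms in \eqref{eq:asymptotics away from interface}.

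For the geometric part I would parametrize the pole phases by $\varphi = \tfrac{\pi}{2} + \alpha\eta + 2\pi\rho j$, observe that $\sin\varphi = \cos(\alpha\eta + 2\pi\rho j) = \cos(\theta_1-\theta_2 + \alpha\cdot 2\pi\rho j)$ by evenness of cosine, and compute
\begin{equation*}
-\frac{\exp\!\left[\tfrac{r_1^2+r_2^2}{4it}\right]}{4\pi i\rho t}\cdot\rho\exp[ix\sin\varphi] = -\frac{1}{4\pi i t}\exp\!\left[\tfrac{r_1^2+r_2^2 - 2r_1 r_2\cos(\theta_1-\theta_2 + \alpha\cdot 2\pi\rho j)}{4it}\right] = \tfrac{1}{t}\tilde{\mathsf{G}}_0^\alpha(j).
\end{equation*}
The indicator $(1+\sigma_\varphi)/2$ in \eqref{eq:S_alpha(x,eta) asymptotics} selects the pole phases with $\sigma_\varphi = +1$, equivalently $\varphi \in (-\tfrac{\pi}{2}, \tfrac{\pi}{2})$, which translates to the index range $-\pi < \alpha(\theta_1-\theta_2) + 2\pi\rho j < 0$ defining $\mathsf{G}_0^\alpha$. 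The same prefactor manipulation turns the complementary error function terms into $\tfrac{1}{t}\mathsf{G}_{-1/2}^\alpha\bigl(r_1 r_2/2t\bigr)^{-1/2}$; the explicit $(r_1 r_2/2t)^{1/2}$ inserted in $\tilde{\mathsf{G}}^\alpha_{-1/2}$ cancels the $(r_1 r_2/2t)^{-1/2}$ of the theorem statement, and these sums range over the full pole set $\mathcal{P}_\rho(\alpha\eta)$, indexed by $-\tfrac{3\pi}{2}\le \alpha(\theta_1-\theta_2) + 2\pi\rho j < \tfrac{\pi}{2}$, because the erfc contributions arise from every pole phase, not only those surrounded by the $C_\varphi$ loops. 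For the diffractive terms, substituting \eqref{eq:defn of B_alpha,beta} and expanding at $s=0$, the Taylor-coefficient contributions $\Gamma(\tfrac{2k+1}{2})\bigl(b_{2k}^{(\alpha,+)} + b_{2k}^{(\alpha,-)}\bigr)x^{-(2k+1)/2}$ assemble, together with the identification $\exp[(r_1^2+r_2^2)/(4it)]\,e^{x\beta i} = \exp[(r_1-\beta r_2)^2/(4it)]$, into precisely $\tfrac{1}{t}\mathsf{D}_{-(2k+1)/2}^\alpha\bigl(r_1r_2/2t\bigr)^{-(2k+1)/2}$.

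Summing over $\alpha = \pm 1$ then produces \eqref{eq:asymptotics away from interface}. Uniformity in the prescribed regime rests on three facts established earlier: the $C_\varphi$ contributions are bounded pure exponentials; the erfc terms are $\mathrm{O}(x^{-1/2})$ with coefficients bounded once $\alpha\eta \not\equiv -\pi, 0 \bmod 2\pi\rho$, as remarked after Lemma \ref{thm:pole contribution C_pm^pm}; and the $\mathsf{D}^\alpha_{-(2k+1)/2}$ are bounded by construction, the offending poles having been subtracted in \eqref{eq:defn of B_alpha,beta}. Piecewise smoothness follows because each term is smooth on the complement of the interface, with the only discontinuities occurring when a pole enters or leaves $\mathcal{P}_\rho(\alpha\eta)\cap(-\tfrac{\pi}{2}, \tfrac{\pi}{2})$ or collides with a branch point. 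The main obstacle is therefore not analytic but combinatorial: lining up the summation ranges under the identifications $\varphi \leftrightarrow (\alpha, j)$, and verifying that the pole-subtracted Taylor coefficients of $B_{\alpha,\beta}$ reassemble into exactly the bracketed expression in the definition of $\mathsf{D}^\alpha_{-(2k+1)/2}$.
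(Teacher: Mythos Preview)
Your proposal is correct and follows essentially the same route as the paper: the theorem is stated with an immediate \qed\ because all the analytic work has been done in Lemmas~\ref{thm:C_varphi contribution}, \ref{thm:pole contribution C_pm^pm}, and \ref{thm:B_alpha,beta asymptotics}, and the paper's surrounding text does exactly what you describe---sum \eqref{eq:S_alpha(x,eta) asymptotics} over $\alpha$, substitute into \eqref{eq:std cone SSK}, and match each species of term with the newly introduced $\mathsf{G}_0^\alpha$, $\mathsf{G}_{-1/2}^\alpha$, $\mathsf{D}_{-(2k+1)/2}^\alpha$ notation. Your verification of the index ranges and the exponential-phase identifications is more explicit than the paper's own treatment, but the logic is identical.
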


\section{Strichartz estimates}\label{sect:Strichartz}

We will now apply the information gained from the asymptotic development of $K_{e^{it\Delta}}$ in Section \ref{sect:Asymptotics} to prove the Strichartz estimates for the solution operator
\begin{equation}
\mathcal{U}(t) f(r,\theta) \defeq \int_{\theta'=0}^{2\pi \rho} \int_{r' = 0}^\infty K_{e^{it\Delta}}(r,\theta,r',\theta') \, f(r',\theta') \, r' dr' d\theta'
\end{equation}
of the Schr\"odinger equation on $C(\mathbb{S}^1_\rho)$.  Thus we end with the following theorem.  

\begin{theorem}
Suppose $\frac{2}{p} + \frac{2}{q} = 1$ and $\frac{2}{\tilde{p}} + \frac{2}{\tilde{q}} = 1$.  Then the Schr\"odinger solution operator $\mathcal{U}(t) \defeq e^{it\Delta}$ on $C(\mathbb{S}^1_\rho)$ satisfies the Strichartz estimates
\begin{equation}\label{eq:Strichartz 1}
\left\| \mathcal{U}(t) f(r,\theta) \right\|_{L^p_t L^q(r \, dr d\theta)} \lesssim \left\|f\right\|_{L^2(r \, dr d\theta)}
\end{equation}
\begin{equation}\label{eq:Strichartz 2}
\left\| \int \mathcal{U}(-s) F(s,r,\theta) \, ds \right\|_{L^2(r \, dr d\theta)} \lesssim \left\| F(t,r,\theta) \right\|_{L^{p'}_t L^{q'}(r \, dr d\theta)}
\end{equation}
\begin{equation}\label{eq:Strichartz 3}
\left\| \int_{s < t} \mathcal{U}(t-s) F(s,r,\theta) \, ds \right\|_{L^p_t L^q(r \, dr d\theta)} \lesssim \left\| F(t,r,\theta) \right\|_{L^{\tilde{p}}_t L^{\tilde{q}}(r \, dr d\theta)} \; .
\end{equation}
\end{theorem}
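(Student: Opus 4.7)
The plan is to invoke the abstract Strichartz theorem of Keel--Tao \cite{KeeTao}, which yields \eqref{eq:Strichartz 1}--\eqref{eq:Strichartz 3} at every non-endpoint pair satisfying $\frac{2}{p}+\frac{2}{q}=1$ once one verifies the energy bound $\|\mathcal{U}(t)f\|_{L^2} \lesssim \|f\|_{L^2}$ and the dispersive bound $\|\mathcal{U}(t)f\|_{L^\infty} \lesssim |t|^{-1}\|f\|_{L^1}$. The energy bound is automatic: $\Delta$ is self-adjoint by construction, so $\mathcal{U}(t) = e^{it\Delta}$ is unitary on $L^2(C(\mathbb{S}^1_\rho), r\,dr\,d\theta)$. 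Hence the entire proof reduces to the dispersive bound, which by duality is equivalent to the uniform kernel estimate $|K_{e^{it\Delta}}| \lesssim |t|^{-1}$, and by \eqref{eq:std cone SSK} further to the uniform bound $|S(x,\eta)| \lesssim 1$ for all $x = \frac{r_1 r_2}{2t} > 0$ and $\eta = \theta_1 - \theta_2$.

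I would split this uniform bound into two overlapping regimes. For $x \leq 1$, Proposition \ref{thm:propagator asymptotics near cone tip} gives $S(x,\eta) = 1 + \mathrm{O}(x^\sigma)$ with constant uniform in $\eta$, hence $|S(x,\eta)| \lesssim 1$. For $x \geq 1$ and $\eta$ away from the interface set $\eta \equiv -\pi, 0, \pi \pmod{2\pi\rho}$, I would apply Theorem \ref{thm:propagator asymptotics away from interface}: the geometric contribution $\mathsf{G}_0^\alpha$ is a sum of at most $\lceil 1/\rho \rceil$ unit-modulus exponentials and so uniformly bounded; the $\mathsf{G}_{-\frac{1}{2}}^\alpha \cdot \left(\frac{r_1 r_2}{2t}\right)^{-\frac{1}{2}}$ contribution is uniformly bounded because the explicit $x^{1/2}$ factor inside $\tilde{\mathsf{G}}^\alpha_{-\frac{1}{2}}$ cancels the outer $x^{-1/2}$ while the $\erfc$ arguments lie on the rays $\arg z = \pm\pi/4$ where $\erfc$ is bounded; and the diffractive tail $\sum_k \mathsf{D}_{-\frac{2k+1}{2}}^\alpha x^{-\frac{2k+1}{2}}$ is $\mathrm{O}(x^{-1/2})$ since each coefficient is bounded by construction of Section \ref{sect:away from geo/diff interface}.

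The main obstacle is patching these two regions together across the interface angles $\eta \equiv -\pi, 0, \pi \pmod{2\pi\rho}$, where Theorem \ref{thm:propagator asymptotics away from interface} is not asserted and where the naive coefficients computed in \S\ref{sect:preliminary asymptotics} blow up. Two observations close the argument. First, $S(x,\eta)$ is smooth in $\eta$ at every fixed $x$ by its definition \eqref{eq:S(x,eta) defn} as a convergent Bessel--cosine series, so jumps of $\mathsf{G}_{-\frac{1}{2}}^\alpha$ as indices enter or leave its summation must be cancelled by matching jumps in the $\mathsf{D}^\alpha$ terms, a cancellation engineered precisely by the pole-subtraction \eqref{eq:defn of B_alpha,beta} that defined $B_{\alpha,\beta}$. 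Second, by continuity of $S$ in $\eta$, the uniform-in-$(x,\eta)$ bound established on the open set of non-interface angles extends to all $\eta$ by taking limits at fixed $x$. With $|S(x,\eta)| \lesssim 1$ established, the dispersive bound $|K_{e^{it\Delta}}| \lesssim |t|^{-1}$ follows, and the three Strichartz inequalities are immediate from Keel--Tao.
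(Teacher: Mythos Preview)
Your proof is correct and follows essentially the same route as the paper: invoke Keel--Tao, get the energy estimate from unitarity, and reduce the dispersive estimate to a uniform bound on $S(x,\eta)$, handling $x\leq 1$ via Proposition~\ref{thm:propagator asymptotics near cone tip} and $x\geq 1$ via the uniform asymptotics of Section~\ref{sect:away from geo/diff interface}. The only cosmetic difference is at the interface angles: you extend the bound by continuity of $S(\,\cdot\,,\eta)$, whereas the paper simply notes that this set has measure zero in $C(\mathbb{S}^1_\rho)$ and hence does not affect the $L^\infty$ estimate.
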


\begin{proof}
To prove the estimates, we will utilize the abstract Strichartz estimate of Keel and Tao \cite{KeeTao}.  Their theorem states that the estimates \eqref{eq:Strichartz 1}, \eqref{eq:Strichartz 2}, and \eqref{eq:Strichartz 3} are implied by $L^2$-boundedness,
\begin{equation}
\left\| \mathcal{U}(t) f(r,\theta) \right\|_{L^2(r \, dr d\theta)} \lesssim \left\| f \right\|_{L^2(r \, dr d\theta)} \; ,
\end{equation}
and a dispersive estimate\footnote{Ignoring the cone tip, one could heuristically expect such a dispersive estimate to hold on $C(\mathbb{S}^1_\rho)$ by appealing to the calculation in \cite{HasWun} and utilizing the fact that these cones have no conjugate points.},
\begin{equation}
\left\|\mathcal{U}(t-s) g(r,\theta) \right\|_{L^\infty} \lesssim \left| t - s \right|^{-1} \left\| g \right\|_{L^1(r \, dr d\theta)} \; .
\end{equation}
Here, $f$ ranges over $L^2\!\left(C(\mathbb{S}^1_\rho)\right)$ and $g$ over $L^1\!\left(C(\mathbb{S}^1_\rho)\right)$.  The first estimate follows from unitarity of $\mathcal{U}(t)$ on $L^2\!\left(C(\mathbb{S}_\rho^1)\right)$.  The second is implied by the claim that $K_{e^{it\Delta}}$ is an element of $t^{-1} \, L^\infty\!\left( \mathbb{R} \times C(\mathbb{S}^1_\rho) \right)$.

Noting that the claim is implied by an $L^\infty$ bound on $S(x,\eta)$, we consider separately the cases where $x \leqslant 1$ and $x \geqslant 1$.  In the former, the claim follows from the computations in the proof of Proposition \ref{thm:propagator asymptotics near cone tip}.  For the case $x \geqslant 1$, it is implied by the calculations \eqref{eq:C_varphi contribution} and \eqref{eq:pole contribution C_pm^pm} of the pole contributions; the expansion \eqref{eq:B_alpha,beta expansion} of the integral over the horizontal contours; the bound \eqref{eq:R^alpha,beta_N bound} for the remainder in this expansion; and the fact that the interface of the geometric and diffractive fronts, where these calculations do not hold, is measure zero in $C(\mathbb{S}^1_\rho)$.  This concludes the proof.
\end{proof}

\begin{bibdiv}
\begin{biblist}

\bib{AbrSte}{collection}{
   title={Handbook of mathematical functions with formulas, graphs, and
   mathematical tables},
   editor={Abramowitz, M.},
   editor={Stegun, I.\ A.},
   note={Reprint of the 1972 edition},
   publisher={Dover Publications Inc.},
   place={New York},
   date={1992},
   pages={xiv+1046},
   isbn={0-486-61272-4},
}

\bib{BurGerTzv}{article}{
   author={Burq, N.},
   author={G{\'e}rard, P.},
   author={Tzvetkov, N.},
   title={On nonlinear Schr\"odinger equations in exterior domains},
   journal={Ann. Inst. H. Poincar\'e Anal. Non Lin\'eaire},
   volume={21},
   date={2004},
   number={3},
   pages={295--318},
   issn={0294-1449},
}

\bib{BurPlaStaTah}{article}{
   author={Burq, N.},
   author={Planchon, F.},
   author={Stalker, J.\ G.},
   author={Tahvildar-Zadeh, A.\ S.},
   title={Strichartz estimates for the wave and Schr\"odinger equations with the inverse-square potential},
   journal={J. Funct. Anal.},
   volume={203},
   date={2003},
   number={2},
   pages={519--549},
   issn={0022-1236},
}

\bib{Che}{article}{
   author={Cheeger, J.},
   title={On the spectral geometry of spaces with cone-like singularities},
   journal={Proc. Nat. Acad. Sci. U.S.A.},
   volume={76},
   date={1979},
   number={5},
   pages={2103--2106},
   issn={0027-8424},
}

\bib{CheTay1}{article}{
   author={Cheeger, J.},
   author={Taylor, M.},
   title={On the diffraction of waves by conical singularities. I},
   journal={Comm. Pure Appl. Math.},
   volume={35},
   date={1982},
   number={3},
   pages={275--331},
   issn={0010-3640},
}

\bib{CheTay2}{article}{
   author={Cheeger, J.},
   author={Taylor, M.},
   title={On the diffraction of waves by conical singularities. II},
   journal={Comm. Pure Appl. Math.},
   volume={35},
   date={1982},
   number={4},
   pages={487--529},
   issn={0010-3640},
}

\bib{DesJac}{article}{
   author={Deser, S.},
   author={Jackiw, R.},
   title={Classical and quantum scattering on a cone},
   journal={Comm. Math. Phys.},
   volume={118},
   date={1988},
   number={3},
   pages={495--509},
   issn={0010-3616},
}

\bib{Fri}{book}{
   author={Friedlander, F. G.},
   title={Sound pulses},
   publisher={Cambridge University Press},
   place={New York},
   date={1958},
   pages={xi+202},
}

\bib{HasWun}{article}{
   author={Hassell, A.},
   author={Wunsch, J.},
   title={The Schr\"odinger propagator for scattering metrics},
   journal={Ann. of Math. (2)},
   volume={162},
   date={2005},
   number={1},
   pages={487--523},
   issn={0003-486X},
}

\bib{Iva}{article}{
   author={Ivanovici, O.},
   title={Precised smoothing effect in the exterior of balls},
   journal={Asymptot. Anal.},
   volume={53},
   date={2007},
   number={4},
   pages={189--208},
   issn={0921-7134},
}

\bib{KeeTao}{article}{
   author={Keel, M.},
   author={Tao, T.},
   title={Endpoint Strichartz estimates},
   journal={Amer. J. Math.},
   volume={120},
   date={1998},
   number={5},
   pages={955--980},
   issn={0002-9327},
}

\bib{MelWun}{article}{
   author={Melrose, R.},
   author={Wunsch, J.},
   title={Propagation of singularities for the wave equation on conic
   manifolds},
   journal={Invent. Math.},
   volume={156},
   date={2004},
   number={2},
   pages={235--299},
   issn={0020-9910},
}

\bib{Olv}{book}{
   author={Olver, F. W. J.},
   title={Asymptotics and special functions},
   series={AKP Classics},
   note={Reprint of the 1974 original},
   publisher={A K Peters Ltd.},
   place={Wellesley, MA},
   date={1997},
   pages={xviii+572},
   isbn={1-56881-069-5},
}

\bib{PlaSta}{article}{
	author={Planchon, F.},
	author={Stalker, J. G.},
	title={Dispersive estimates on the 2D cone},
	year={2002},
	status={unpublished},
}

\bib{Som}{article}{
   author={Sommerfeld, A.},
   title={Mathematische Theorie der Diffraction},
   journal={Math. Ann.},
   volume={47},
   date={1896},
   number={2-3},
   pages={317--374},
   issn={0025-5831},
}

\bib{Tay2}{book}{
   author={Taylor, M.},
   title={Partial differential equations. II},
   series={Applied Mathematical Sciences},
   volume={116},
   note={Qualitative studies of linear equations},
   publisher={Springer-Verlag},
   place={New York},
   date={1996},
   pages={xxii+528},
   isbn={0-387-94651-9},
}

\bib{vdW}{article}{
   author={van der Waerden, B. L.},
   title={On the method of saddle points},
   journal={Appl. Sci. Research B.},
   volume={2},
   date={1951},
   pages={33--45},
}

\bib{Wat}{book}{
   author={Watson, G. N.},
   title={A treatise on the theory of Bessel functions},
   series={Cambridge Mathematical Library},
   note={Reprint of the second (1944) edition},
   publisher={Cambridge University Press},
   place={Cambridge},
   date={1995},
   pages={viii+804},
   isbn={0-521-48391-3},
}

\end{biblist}
\end{bibdiv}

\end{document}